\newtheorem{thm}{Theorem}[section]
\newtheorem{cor}[thm]{Corollary}
\newtheorem{prop}[thm]{Proposition}
\newtheorem{lem}[thm]{Lemma}
\newtheorem*{thm1}{Theorem A}
\theoremstyle{definition}
\newtheorem{defn}[thm]{Definition}
\theoremstyle{remark}
\newtheorem{rem}[thm]{Remark}
\let\c@equation\c@thm
\numberwithin{equation}{section}
\title[]{Nonnegative Ricci curvature and escape rate gap}
\author[]{Jiayin Pan}
\address{The Fields Institute for Research in Mathematical Sciences, Toronto, Ontario, Canada.}
\thanks{The author was partially supported by AMS Simons travel grant when preparing this work.}
\email{jypan10@gmail.com}
\begin{document}

\begin{abstract}
	Let $M$ be an open $n$-manifold of nonnegative Ricci curvature and let $p\in M$. We show that if $(M,p)$ has escape rate less than some positive constant $\epsilon(n)$, that is, minimal representing geodesic loops of $\pi_1(M,p)$ escape from any bounded balls at a small linear rate with respect to their lengths, then $\pi_1(M,p)$ is virtually abelian. This generalizes the author's previous work \cite{Pan_escape}, where the zero escape rate is considered.
\end{abstract}
	
\subjclass[2010]{53C23,53C20,57S30}	
	
\maketitle
	
We study the structure of fundamental group of open manifolds with nonnegative Ricci curvature. In comparison to sectional curvature, we recall that it follows from soul theorem that the fundamental group of any open manifold with nonnegative sectional curvature is virtually abelian, that is, it contains an abelian subgroup of finite index \cite{CG_soul}. Regarding Ricci curvature, Wei constructed open manifolds with positive Ricci curvature and fundamental groups that are torsion-free nilpotent \cite{Wei}. Later, Wilking proved that any finitely generated virtually nilpotent group can be realized as the fundamental group of some open manifold of positive Ricci curvature \cite{Wilk}. Conversely, any finitely generated subgroup of $\pi_1(M)$ has polynomial growth \cite{Mil}, so by Gromov's work \cite{Gro_poly}, it has a nilpotent subgroup of finite index (also see \cite{KW}).

The author discovered that the virtual abelianness/nilpotency of $\pi_1(M)$ is related to where the representing geodesic loops of $\pi_1(M,p)$ are positioned in $M$ \cite{Pan_escape}. For any element $\gamma\in \pi_1(M,p)$, we can choose a geodesic loop based at $p$ representing $\gamma$ of minimal length, denoted by $c_\gamma$. It is known before that Cheeger-Gromoll splitting theorem \cite{CG_split} implies that if all representing geodesic loops are contained in a bounded set, then $\pi_1(M)$ is virtually abelian. However, it is prevalent for representing geodesic loops to escape from any bounded balls in nonnegative Ricci curvature: if $M$ has positive Ricci curvature and an infinite fundamental group, then this escape phenomenon always occurs \cite{SW}. The escape rate $E(M,p)$ introduced in \cite{Pan_escape} measures how fast the representing geodesic loops of $\pi_1(M,p)$ escape from any bounded balls by comparing the size of $c_\gamma$ to its lengths:
$$E(M,p):=\limsup_{|\gamma|\to\infty} \dfrac{d_H(p,c_\gamma)}{|\gamma|},$$
where $|\gamma|$ is the length of $c_\gamma$ and $d_H$ is the Hausdorff distance. For a doubly warped product $M=[0,\infty)\times_f S^{p-1}\times_h S^1$, $E(M,p)$ is determined by the decaying rate of the warping function $h(r)$ (see \cite[Appendix B]{Pan_escape}). As $h(r)$ decreases, a representing geodesic loop would take advantage the thin end to shorten its length, while this also increases its size. Hence the faster $h(r)$ decays, the larger escape rate $(M,p)$ has. As the main result of \cite{Pan_escape}, we proved that if $E(M,p)=0$, then $\pi_1(M)$ is virtually abelian.

In this paper, we further generalize the above mentioned result by proving a universal escape rate gap.
	
\begin{thm1}
	Given $n$, there is a positive constant $\epsilon(n)$ such that for any open $n$-manifold $(M,p)$ of $\mathrm{Ric}\ge 0$, if $E(M,p)\le \epsilon(n)$, then $\pi_1(M,p)$ is virtually abelian.
\end{thm1}

In other words, if $\pi_1(M)$ contains a free nilpotent non-abelian subgroup, then $E(M,p)>\epsilon(n)$, that is, there is a sequence of elements $\gamma_i\in \pi_1(M,p)$ such that $d_H(x,c_{\gamma_i})>\epsilon(n)|\gamma_i|$. The converse of Theorem A is not true in general (see \cite[Appendix B]{Pan_escape}).

The proof of Theorem A involves the study of geometry of $(\widetilde{M},\tilde{p},\pi_1(M,p))$ at infinity, where $(\widetilde{M},\tilde{p})$ is the Riemannian universal cover of $(M,p)$ and $\pi_1(M,p)$ acts on $\widetilde{M}$ as isometries. For any sequence $r_i\to\infty$, we can pass to a subsequence and obtain the following pointed equivariant Gromov-Hausdorff convergence \cite{FY}:
$$(r_i^{-1}\widetilde{M},\tilde{p},\pi_1(M,p))\overset{GH}\longrightarrow (Y,y,G).$$
The limit $(Y,y,G)$ is called an equivariant asymptotic cone of $(\widetilde{M},\pi_1(M,p))$. To illustrate the approach to Theorem A, we first recall the strategy for the zero escape rate case \cite{Pan_escape}, which roughly goes as follows:

$E(M,p)=0;$\\
$\Rightarrow$ $Gy$ is geodesic in $Y$ for any equivariant asymptotic cone $(Y,y,G)$;\\
$\Rightarrow$ $Gy$ is a metric product $\mathbb{R}^k\times Z$, where $Z$ is compact, for any $(Y,y,G)$;\\
$\Rightarrow$ $Gy$ is a standard Euclidean space for any $(Y,y,G)$;\\
$\Rightarrow$ Any nilpotent subgroup $N$ of $\Gamma$ acts as almost translations on $\widetilde{M}$ at large scale;\\
$\Rightarrow$ $\Gamma$ is virtually abelian.

To study the case $E(M,p)\le\epsilon$, we quantify the approach above. As the first step, we will introduce the concept of $\delta$-geodesic, which measures how close a subset is to being geodesic (Definition \ref{def_delta_geo}), and show that $Gy$ is $\delta_\epsilon$-geodesic, where $\delta_\epsilon\to 0$ as $\epsilon\to 0$ (Proposition \ref{limit_delta_geodesic}). 

Regarding the second and third steps above, we use pointed Gromov-Hausdorff closeness to quantify. One may expect that $Gy$ is $\Phi(\epsilon|n)$-close to $\mathbb{R}^k\times Z$, where $Z$ is compact, or $\mathbb{R}^k$ in the pointed Gromov-Hausdorff sense, where $\Phi(\epsilon|n)$ is an unspecified function depending on $\epsilon$ and $n$ with $\lim_{\epsilon\to 0}\Phi(\epsilon|n)=0$. However, this statement has a clear obstruction in the second step: pointed Gromov-Hausdorff closeness cannot distinguish a non-compact space from a compact one with a large diameter. To overcome this, for any equivariant asymptotic cone $(Y,y,G)$, we shall consider an associated family of spaces $\{(sY,y,G)\}_{s>0}$, where $(sY,y,G)$ means scaling $(Y,y,G)$ by $s$. We shall apply Cheeger-Colding quantitative splitting theorem \cite{CC1} to $(Y,y,G)$ only when almost splitting holds for all $(sY,y,G)$. With this idea, we show that for any $(Y,y,G)$, either $Gy$ in $(sY,y,G)$ is close to $\mathbb{R}^k$ for all $s>0$, or there is some $s>0$ such that $Gy$ in $(sY,y,G)$ is close to a product $\mathbb{R}^k\times Z_s$ with the diameter of $Z_s$ being around $1$ (see Proposition \ref{split_all_scales}). Next, we further rule out the compact factor $Z_s$; more precisely, we prove the following (also see Definition \ref{def_GH_subset} and Proposition \ref{converse}):

\begin{thm}\label{almost_eu_orbit}
	Let $(M,p)$ be an open $n$-manifold with $\mathrm{Ric}\ge 0$ and $E(M,p)\le\epsilon$. Then there is an integer $k$ such that for any $(Y,y,G)\in\Omega(\widetilde{M},\Gamma)$, we have
	$$d_{GH}((Y,y,Gy),(\mathbb{R}^k\times X,(0,x),\mathbb{R}^k\times \{x\}))\le \Phi(\epsilon|n),$$
	where $(X,x)$ is a length space that depends on $(Y,y)$.
\end{thm}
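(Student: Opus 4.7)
The plan is to combine the almost-splitting dichotomy of Proposition \ref{split_all_scales} with a compactness-and-contradiction argument that reduces the small-escape-rate case to the zero-escape-rate theory of \cite{Pan_escape}. Fix $(M,p)$ with $\Gamma=\pi_1(M,p)$, and choose $k$ to be the Euclidean rank furnished by Proposition \ref{split_all_scales}. This integer should be intrinsic to $(\widetilde{M},\Gamma)$ and uniform across all $(Y,y,G)\in\Omega(\widetilde{M},\Gamma)$, since any two equivariant asymptotic cones arise by rescaling the same universal cover.

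For each $(Y,y,G)$, Proposition \ref{split_all_scales} provides a dichotomy: either $Gy$ in $(sY,y,G)$ is $\Phi(\epsilon|n)$-close to $\mathbb{R}^k$ for every $s>0$, or there is some $s_0>0$ at which $Gy$ in $(s_0Y,y,G)$ is close to $\mathbb{R}^k\times Z_{s_0}$ with $Z_{s_0}$ compact of diameter $\approx 1$. Specializing the first alternative to $s=1$ immediately gives the required closeness to $(\mathbb{R}^k\times\{x\},(0,x),\mathbb{R}^k\times\{x\})$. Hence the content of the theorem is ruling out the second alternative when $\epsilon$ is sufficiently small.

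To rule out the compact factor, I would argue by contradiction. Suppose there exist $\eta>0$, a sequence $(M_i,p_i)$ with $\mathrm{Ric}\ge 0$ and $E(M_i,p_i)\le\epsilon_i\to 0$, and cones $(Y_i,y_i,G_i)\in\Omega(\widetilde{M}_i,\Gamma_i)$ falling into the second alternative at scales $s_i>0$. Each rescaled triple $(s_iY_i,y_i,G_i)$ is itself an equivariant asymptotic cone of $(M_i,p_i)$, obtained by adjusting the original rescaling sequence. A diagonal extraction would then yield a pointed equivariant Gromov-Hausdorff subsequential limit $(Y_\infty,y_\infty,G_\infty)$ whose orbit $G_\infty y_\infty$ is genuinely geodesic, by Proposition \ref{limit_delta_geodesic} together with $\delta_{\epsilon_i}\to 0$, while retaining a nontrivial compact factor $Z_\infty$ with $\mathrm{diam}(Z_\infty)\approx 1$ by construction.

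The final contradiction should come from applying the zero-escape-rate structural theory of \cite{Pan_escape} to $(Y_\infty,y_\infty,G_\infty)$: a geodesic orbit on such a Ricci limit space must split as $\mathbb{R}^{k'}\times Z'$ with $Z'$ compact, and then, via Cheeger-Gromoll splitting together with equivariant rigidity (the step reducing the third bullet to the fourth in the outline preceding Theorem A), $Z'$ must further collapse to a point. This forces $G_\infty y_\infty\cong\mathbb{R}^{k'}$ and contradicts $\mathrm{diam}(Z_\infty)\approx 1$. The main obstacle will be this final step: because $(Y_\infty,y_\infty,G_\infty)$ arises only as a diagonal limit of asymptotic cones of a sequence of distinct manifolds, not as an asymptotic cone of one fixed manifold, one has to verify that the arguments of \cite{Pan_escape} depend only on abstract properties of the limit triple (geodesic orbit, equivariance, nonnegative Ricci limit structure) rather than on the ambient source $(M,p)$.
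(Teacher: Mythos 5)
Your approach differs fundamentally from the paper's and has a genuine gap precisely at the "final step" you flag as the main obstacle --- and that obstacle is not something one can verify away; it is a structural mismatch. The paper rules out the compact factor $Z_s$ via a \emph{critical rescaling argument within a single universal cover}: given one cone in the bad case (2) at scale $s$, it compares $(sY,y,G)$ with $(10sY,y,G)$, realizes both as limits of $(r_i^{-1}\widetilde{M},\tilde p,\Gamma)$ and $(s_i^{-1}\widetilde{M},\tilde p,\Gamma)$ respectively, interpolates an intermediate critical scale $l_i$ along the \emph{same} sequence of rescaled manifolds, and derives a contradiction from near-minimality of $l_i$. This uses in an essential way that all the relevant triples live in the one set $\Omega(\widetilde M,\Gamma)$, which is compact and connected (Proposition \ref{cpt_cnt}), so that the "width" quantity $\mathrm{Wid}_A$ can be tracked continuously along a one-parameter family of scales of the same space.

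Your plan instead takes a sequence of \emph{different} manifolds $(M_i,p_i)$ with $\epsilon_i\to 0$ and passes to a diagonal limit $(Y_\infty,y_\infty,G_\infty)$. The early steps are fine (the limit orbit is geodesic by Lemma \ref{to_geodesic}, and being $\delta$-geodesic is scale invariant, so $sY_\infty$ also has geodesic orbit for all $s$), and one can split off Euclidean factors until reaching $\mathbb{R}^{k'}\times Z'$ with $Z'$ compact. But the step that forces $Z'$ to be a point in \cite{Pan_escape} is itself a critical rescaling argument that lives inside the compact, connected family $\Omega(\widetilde M,\Gamma)$ attached to a single manifold. Your $(Y_\infty,y_\infty,G_\infty)$ does not belong to any such family: it is a Gromov--Hausdorff limit of elements of $\Omega(\widetilde M_i,\Gamma_i)$ for varying $i$, and there is no single universal cover whose asymptotic cones all pass through it. So the needed "rescale and contradict compactness of $\Omega$" move has nothing to contradict. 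There is no abstract theorem in \cite{Pan_escape} of the form "a geodesic orbit in a Ricci limit space splits with trivial compact factor"; the triviality of the compact factor there is a global conclusion about all cones of one fixed $\widetilde M$, obtained precisely by the rescaling-within-$\Omega$ mechanism you would need but lack. You also assert, without argument, that the integer $k$ from Proposition \ref{split_all_scales} is automatically uniform over $\Omega(\widetilde M,\Gamma)$; in the paper this uniformity is a \emph{consequence} of the theorem, obtained via Proposition \ref{cpt_cnt} only after the closeness estimate has been established for each cone separately with the $\Phi(\epsilon|n)$ error, not an input.
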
	

The proof of Theorem \ref{almost_eu_orbit} relies on a critical rescaling argument, which is effective to prove uniformity among all equivariant asymptotic cones. This type of argument is first introduced by the author in \cite{Pan_eu} and also applied in \cite{Pan_al_stable,Pan_escape}.

We organize the paper as follows. We start with preliminaries in Section \ref{sec_pre}. In Section \ref{sec_geo}, we introduce the notion of {$\delta$-geodesic} and show that the limit orbit $Gy$ is always $\delta$-geodesic. In Section \ref{sec_split}, we study the quantitative splitting behavior of $Gy$ in the associated family $\{(sY,y,G)\}_{s>0}$. In Section \ref{sec_eu}, we prove Theorem \ref{almost_eu_orbit} and Theorem A. 

\tableofcontents

\section{Preliminaries}\label{sec_pre}

\noindent\textbf{1.1 Almost splitting}

Cheeger and Colding proved a quantitative splitting result for manifolds with almost nonnegative Ricci curvature \cite{CC1}. Here we need a version for Ricci limit spaces, which follows directly from the result on manifolds. We denote $\mathcal{M}(n,0)$ the set of all Ricci limit spaces coming from some sequence of complete Riemannian $n$-manifolds $(M_i,p_i)$ of $\mathrm{Ric}\ge 0$. Given $y_-,y_+$ in a space $Y\in\mathcal{M}(n,0)$, recall that the excess function is
$$e_{y_+,y_-}(z)=d(z,y_+)+d(z,y_-)-d(y_+,y_-).$$

\begin{thm}\label{thm_CC_split}\cite{CC1}
	Let $(Y,y)\in\mathcal{M}(n,0)$. Let $y_-,y_+\in Y$ with $d(y_\pm,y)=L\gg R$ and $e_{y_-,y_+}(y)\le \delta$, then there exists a length space $(X,x)$ such that
	$$d_{GH}(B_R(y),B_R(0,x))\le \Phi(\delta,L^{-1}|n,R),$$
	where $(0,x)$ is a point in the metric product $\mathbb{R}\times X$.
\end{thm}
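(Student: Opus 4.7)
The plan is to deduce the statement for Ricci limits from the manifold version proved in \cite{CC1} by a diagonal argument. By definition of $\mathcal{M}(n,0)$, we can write $(Y,y)$ as the pointed Gromov-Hausdorff limit of a sequence $(M_i,p_i)$ of complete Riemannian $n$-manifolds with $\mathrm{Ric}\ge 0$. First I would lift the two reference points: choose $y_{\pm}^i\in M_i$ with $y_{\pm}^i\to y_{\pm}$. Continuity of distances under pointed GH convergence then gives $d(y_{\pm}^i,p_i)\to L$ and $e_{y_{-}^i,y_{+}^i}(p_i)\to e_{y_{-},y_{+}}(y)\le \delta$; hence for all $i$ large enough, the manifold data $(M_i;p_i,y_{\pm}^i)$ satisfies the hypotheses of the theorem with parameters $(\delta_i,L_i)$ where $\delta_i\to \delta$ and $L_i\to L$.

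Next, apply the manifold version of Cheeger-Colding's almost splitting theorem to each $(M_i,p_i,y_{\pm}^i)$ for $i$ large. This produces, for each such $i$, a length space $(X_i,x_i)$ together with a GH approximation
$$d_{GH}\bigl(B_R(p_i),B_R((0,x_i))\bigr)\le \Phi(\delta_i,L_i^{-1}|n,R).$$
Because $B_R((0,x_i))$ is uniformly GH-close to $B_R(p_i)$ in a manifold with nonnegative Ricci curvature, $B_R(x_i)\subset X_i$ inherits uniform Bishop-Gromov-type volume doubling. By Gromov precompactness, a subsequence satisfies $(X_i,x_i)\to (X,x)$ in pointed GH, and consequently $(\mathbb{R}\times X_i,(0,x_i))\to (\mathbb{R}\times X,(0,x))$ as well, with $X$ again a length space. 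Combining the three convergences via the triangle inequality and passing to the limit gives
$$d_{GH}\bigl(B_R(y),B_R((0,x))\bigr)\le \Phi(\delta,L^{-1}|n,R),$$
after replacing $\Phi$ by a monotone upper envelope to ensure the right-hand side is lower semicontinuous in its arguments.

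The only real obstacle is the bookkeeping around the function $\Phi$: one must check that the slack parameters $(\delta_i,L_i)$ in the manifold bound pass through to the limit without losing the form of the estimate. This is a cosmetic issue handled by replacing $\Phi$ with a monotone envelope, since $\Phi$ is unspecified apart from its limiting behavior as $(\delta,L^{-1})\to (0,0)$. Everything else is a routine manipulation of pointed GH convergence, which is why the excerpt simply remarks that the Ricci limit statement follows directly from the manifold one.
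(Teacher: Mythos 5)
Your proposal is correct and follows exactly the argument the paper has in mind: the paper simply remarks that the Ricci-limit version ``follows directly from the result on manifolds,'' and your diagonal/compactness argument is the standard way to make that remark precise. One small point of bookkeeping you gloss over but handle acceptably: the precompactness of $(X_i,x_i)$ is most cleanly obtained not by transferring volume doubling to $X_i$ directly (the $X_i$ are level sets, not Ricci-lower-bounded manifolds), but by observing that $B_R(0,x_i)\subseteq\mathbb{R}\times X_i$ is uniformly GH-close to $B_R(p_i)$, which is uniformly totally bounded by Bishop–Gromov, and hence $B_R(x_i)\subseteq X_i$ is uniformly totally bounded as a slice of a metric product; the monotone-envelope trick for $\Phi$ then closes the argument.
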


Here $\Phi(\delta_1,...,\delta_k|c_1,...,c_l)$ means a nonnegative function depending on $\delta_1,...,\delta_k$ and $c_1,...,c_l$ such that
$$\lim\limits_{\delta_1,...,\delta_k\to 0} \Phi(\delta_1,...,\delta_k|c_1,...,c_l)=0.$$

We briefly recall how Theorem \ref{thm_CC_split} is proved for manifolds since we need some elements from this proof later. Let $(M,y)$ be a complete Riemannian manifold of $\mathrm{Ric}\ge0$. Given $y_-,y_+\in M$ with the assumptions in Theorem \ref{thm_CC_split}, the partial Busemann function $b$ is defined as $$b(z)=d(z,y_+)-d(y,y_+).$$
Let $h$ be the solution to the Dirichlet problem
$$\begin{cases} \Delta h=0 \quad & \text{on } B_{20R}(y),\\
h=b \quad & \text{on } \partial B_{20R}(y). \end{cases}$$
Among other properties, $h$ satisfies the following estimates.
\begin{prop}\cite{CC1}\label{split_estimates}
	Let $z\in B_R(y)$ and $w\in h^{-1}(h(y))$ be a closest point from $z$ to $h^{-1}(h(y))$. Then the followings hold:\\
	(1) ($C^0$-estimate) $|h(z)-b(z)|\le \Phi(\delta,L^{-1}|n,R)$;\\
	(2) (Almost parallel) $d(z,w)=|h(z)-h(y)|\pm \Phi(\delta,L^{-1}|n,R)$;\\
	(3) (Almost Pythagorean) $|d(y,w)^2+d(z,w)^2-d(z,y)^2|\le \Phi(\delta,L^{-1}|n,R)$.
\end{prop}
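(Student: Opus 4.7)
The plan is to follow the standard Cheeger--Colding strategy: treat $h$ as a harmonic replacement for the partial Busemann function $b$ on $B_{20R}(y)$, extract a small integral Hessian bound from the Bochner identity, and convert this into the pointwise geometric statements via the segment inequality. For (1), set $b_+:=b$ and $b_-(z):=d(z,y_-)-d(y,y_-)$, so that $e:=e_{y_-,y_+}=b_++b_-\ge 0$ with $e(y)\le \delta$. Since $d(\cdot,y_\pm)\ge L-20R$ on $B_{20R}(y)$, Laplace comparison gives $\Delta b_\pm\le (n-1)/(L-20R)=\Phi(L^{-1}|n,R)$ in the barrier sense. The Abresch--Gromoll excess estimate, applied with this Laplacian bound and $e(y)\le\delta$, upgrades to $e\le \Phi(\delta,L^{-1}|n,R)$ on $B_R(y)$. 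Since $h=b$ on $\partial B_{20R}(y)$ and the maximum principle can be applied with $b_+$ as an almost-supersolution and $-b_-$ as an almost-subsolution (these trap $b$ modulo $e$), one gets (1).

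For (2) and (3), I would apply the Bochner formula $\tfrac12\Delta|\nabla h|^2=|\nabla^2 h|^2+\mathrm{Ric}(\nabla h,\nabla h)\ge |\nabla^2 h|^2$. Integrating against a cutoff supported in $B_{10R}(y)$ and using (1) together with $|\nabla b|=1$ to force $|\nabla h|$ to be $L^2$-close to $1$ on $B_{2R}(y)$ yields the integral Hessian estimate $\int_{B_{2R}(y)}|\nabla^2 h|^2\le \Phi(\delta,L^{-1}|n,R)$. By Cheeger--Colding's segment inequality, along almost every pair of points in $B_R(y)$ the minimizing geodesic is $\Phi$-close to an integral curve of $\nabla h/|\nabla h|$ on which $h$ grows at almost unit rate. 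Given $z\in B_R(y)$ and its closest point $w$ on $h^{-1}(h(y))$, the inequality $|h(z)-h(y)|\le d(z,w)+\Phi$ follows from $h$ being almost $1$-Lipschitz; the reverse inequality follows by flowing $z$ to the level set along the almost-geodesic integral curves, yielding (2). Statement (3) then follows from a first-variation argument: the almost-geodesic from $y$ to $w$ stays within the almost-level set $h^{-1}(h(y))$ and the almost-geodesic from $w$ to $z$ is almost-normal to this level set, so the angle at $w$ is close to $\pi/2$, and comparison in the resulting almost-product structure gives the Pythagorean identity modulo $\Phi$.

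The main obstacle is upgrading the $L^2$ Hessian estimate to the pointwise statements (2) and (3) uniformly for \emph{every} $z\in B_R(y)$: the segment inequality only controls almost every pair of points, not every one, so one needs Cheeger--Colding's maximal-function and connectivity device: from a given $z$, select a nearby ``good'' point at which the almost-parallel estimate holds, run the argument there, and transfer the conclusion back to $z$ with an additional $\Phi$ loss. A secondary difficulty is ensuring that $w$ is sufficiently generic for the first-variation step of (3); this is handled by a small perturbation of $w$ within the level set $h^{-1}(h(y))$ together with the $C^0$-estimate in (1).
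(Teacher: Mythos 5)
This proposition is not proved in the paper at all: it is quoted verbatim from Cheeger--Colding \cite{CC1} as background for the construction of the Gromov--Hausdorff approximation $F$ in the quantitative splitting theorem, and the paper defers the proof entirely to that reference. So there is no ``paper's own proof'' to compare against; what you have written is a sketch of the actual \cite{CC1} argument.

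As such a sketch it is accurate in outline. For (1), the ingredients are exactly as you say: the two partial Busemann functions $b_\pm$ have $e=b_++b_-\ge 0$ with $e(y)\le\delta$ and $\Delta b_\pm\le (n-1)/(L-20R)$ by Laplace comparison, the Abresch--Gromoll excess estimate spreads the smallness of $e$ over $B_R(y)$, and the maximum principle with $b_+$ and $-b_-$ as almost-super/sub-solutions sandwiches $h$ against $b$. For (2) and (3), you correctly identify the engine: the Bochner inequality integrated against a cutoff gives a volume-normalized $L^2$ bound on $|\nabla^2 h|$ and on $|\nabla h|^2-1$, and the segment inequality transfers these to almost-every geodesic segment in $B_R(y)$, after which the almost-parallel estimate follows by flowing to the level set, and the almost-Pythagorean estimate by a first-variation / almost-right-angle argument at $w$. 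You have also correctly flagged the two places where care is needed: the segment inequality is an almost-everywhere statement while the proposition asserts estimates for \emph{every} $z\in B_R(y)$ and its specific foot point $w$, and the upgrade requires the maximal-function and ``good point nearby'' device of Cheeger--Colding together with the $C^0$ bound from (1) to transfer conclusions back to $z$. One small caveat worth stating explicitly: the integral Hessian estimate must be taken as an average (normalized by $\mathrm{vol}\,B_{2R}(y)$), not as a raw integral, for the segment inequality to give a scale-invariant $\Phi$; and the almost-$1$-Lipschitz bound used for the easy direction of (2) should be derived pointwise from (1) and the $1$-Lipschitz property of $b$ rather than from the integral gradient bound. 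With those clarifications your sketch is a faithful rendering of the Cheeger--Colding proof.
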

Then the map
$$F:B_R(y)\to \mathbb{R}\times h^{-1}(h(y)),\quad z\mapsto (h(z)-h(y),w),$$
where $w\in h^{-1}(h(y))$ is a closest point from $z$ to $h^{-1}(h(y))$, is a $\Phi(\delta,L^{-1}|n,R)$-approximation between $B_R(y)$ and $B_R(0,x)$ \cite{CC1}.\\

\noindent\textbf{1.2 Asymptotic geometry}

Let $(M,p)$ be an open manifold of $\mathrm{Ric}\ge 0$ and let $r_i\to\infty$. Then there is a subsequence converging in the pointed Gromov-Hausdorff topology:
$$(r_i^{-1}M,p)\overset{GH}\longrightarrow (Z,z).$$
The limit space $(Z,z)$ is called an \textit{asymptotic cone} of $M$. $(Z,z)$ in general depends on the scaling sequence $r_i$, so $M$ may not have a unique asymptotic cone. 

Let $(\widetilde{M},\tilde{p})$ be the Riemannian universal cover of $(M,p)$ and let $\Gamma$ be the fundamental group $\pi_1(M,p)$, which acts on $\widetilde{M}$ isometrically. For a sequence $r_i\to \infty$, we can consider a convergent subsequence in the pointed equivariant Gromov-Hausdorff topology \cite{FY}:
$$(r_i^{-1}\widetilde{M},\tilde{p},\Gamma)\overset{GH}\longrightarrow (Y,y,G),$$
where $G$ is a closed subgroup of the isometry group of $Y$. We call $(Y,y,G)$ an \textit{equivariant asymptotic cone} of $(\widetilde{M},\Gamma)$. Note that $(Y,y)\in\mathcal{M}(n,0)$, so Theorem \ref{thm_CC_split} can be applied. According to \cite{CC2,CN}, the limit group $G$ is a Lie group. In particular, $G/G_0$ is discrete, where $G_0$ is the identity component subgroup of $G$. Hence there is a positive distance between different components of the orbit $Gy$.

Let $\Omega(\widetilde{M},\Gamma)$ be the set of all equivariant asymptotic cones of $(\widetilde{M},\Gamma)$. The result below is well-known (see \cite[Proposition 2.1]{Pan_eu} for a proof).

\begin{prop}\label{cpt_cnt}
	Let $(M,p)$ be an open $n$-manifold with $\mathrm{Ric}\ge 0$. Then the set $\Omega(\widetilde{M},\Gamma)$ is compact and connected in the pointed equivariant Gromov-Hausdorff topology.
\end{prop}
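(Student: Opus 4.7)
The plan is to prove the two assertions separately. Both are soft facts in the equivariant Gromov--Hausdorff category; the Ricci hypothesis enters only through Gromov's precompactness.

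For compactness, the nonemptiness of $\Omega(\widetilde M,\Gamma)$ follows directly from Gromov's precompactness applied to the rescalings $(r_i^{-1}\widetilde M,\tilde p)$ together with Fukaya--Yamaguchi's equivariant refinement, which produce a subsequential limit $(Y,y,G)\in\Omega$. To upgrade this to closedness of $\Omega$ as a subset of the space of pointed equivariant metric spaces, I would take an arbitrary sequence $(Y_i,y_i,G_i)\in\Omega$, each realized as the limit of some $(s_{i,j}^{-1}\widetilde M,\tilde p,\Gamma)$ as $j\to\infty$, and run a Cantor diagonal argument: choose $s_i:=s_{i,j(i)}$ with $s_i\to\infty$ and with the equivariant pointed Gromov--Hausdorff pseudodistance between $(s_i^{-1}\widetilde M,\tilde p,\Gamma)$ and $(Y_i,y_i,G_i)$, measured on the ball of radius $i$, below $1/i$. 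Any accumulation point of $\{(Y_i,y_i,G_i)\}$ is then simultaneously one of $\{(s_i^{-1}\widetilde M,\tilde p,\Gamma)\}$, and hence lies in $\Omega$ by definition.

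For connectedness, the key idea is to view $\Omega$ as the accumulation set at $r\to\infty$ of the continuous one-parameter family
\[
\Phi:(0,\infty)\to\mathcal{X},\qquad \Phi(r)=(r^{-1}\widetilde M,\tilde p,\Gamma),
\]
where $\mathcal{X}$ is the space of isometry classes of pointed equivariant metric spaces. Continuity of $\Phi$ in $r$ is immediate from the fact that the identity map $(r^{-1}\widetilde M,\tilde p)\to(s^{-1}\widetilde M,\tilde p)$ is an $(s/r)$-similarity, with constant tending to $1$ as $s/r\to 1$, and an analogous bound holds for the $\Gamma$-action on any fixed ball. Suppose for contradiction $\Omega=A\sqcup B$ with $A,B$ nonempty closed and having positive equivariant Gromov--Hausdorff separation at some fixed gauge scale. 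Pick $r_i,s_i\to\infty$ with $\Phi(r_i)\to a\in A$ and $\Phi(s_i)\to b\in B$. Applying the intermediate value theorem to the continuous real-valued function $r\mapsto d_{eGH}(\Phi(r),A)$ on the intervals $[r_i,s_i]$ produces scales $t_i\to\infty$ on which $\Phi(t_i)$ stays a definite distance away from both $A$ and $B$. The compactness established above then extracts a subsequential limit of $\Phi(t_i)$ in $\Omega\setminus(A\cup B)$, contradicting $\Omega=A\sqcup B$.

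The main obstacle, or at least the step requiring most care, is bookkeeping around the equivariant pointed Gromov--Hausdorff pseudodistance itself: it is defined via truncations to bounded balls together with a finite generating set up to a scale, and both the diagonalization in the compactness step and the intermediate value step in the connectedness argument must be carried out uniformly with respect to this family of gauges. Fukaya--Yamaguchi's framework supplies the right language to make both arguments rigorous, and once it is in place the proposition follows essentially formally from continuity of rescaling plus Gromov precompactness.
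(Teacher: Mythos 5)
Your argument is correct and is essentially the one used in the reference the paper cites for this result: the paper does not write out a proof but points to \cite[Proposition 2.1]{Pan_eu}, which establishes compactness by combining Gromov precompactness (in the equivariant form of Fukaya--Yamaguchi) with the diagonal argument you describe, and establishes connectedness by exploiting continuity of $r\mapsto(r^{-1}\widetilde M,\tilde p,\Gamma)$ together with an intermediate-value argument to rule out any separation of $\Omega$ into two nonempty closed pieces. Your one loose phrase is calling the diagonalization step ``closedness'': closedness of $\Omega$ inside the ambient space would not by itself give compactness; what you actually use is that any sequence in $\Omega$ can be shadowed by a sequence of rescalings which is precompact, so your conclusion (existence of a subsequential limit lying in $\Omega$) is sequential compactness, which is the right statement.
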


\noindent\textbf{1.3 Gromov-Hausdorff convergence of closed subsets}

To study the quantitative splitting behavior of the limit orbit $Gy$ in an equivariant asymptotic cone $(Y,y,G)$, we need a natural notion to measure the closeness between closed subsets in two nearby pointed spaces.

\begin{defn}\label{def_GH_subset}
	For $i=1,2$, let $(X_i,x_i)$ be a complete pointed length space and let $A_i$ be a closed subset of $X_i$ with $x_i\in A_i$. Let $\delta>0$. We say that a map $\phi:X_1\to X_2$ is an $\delta$-approximation from $(X_1,x_1,A_1)$ to $(X_2,x_2,A_2)$ if\\
	(1) $d(x_2,\phi(x_1))\le\delta$,\\
	(2) $|d(z,z')-d(\phi(z),\phi(z'))|\le\delta$ for all $z,z'\in B_{1/\delta}(x_1)$,\\
	(3) $B_{1/\delta}(x_2)\subseteq B_{\delta}\left(\phi(B_{1/\delta}(x_1))\right)$,\\
	(4) $B_{1/\delta}(x_2)\cap A_2 \subseteq B_{\delta}\left(\phi(B_{1/\delta}(x_1)\cap A_1)\right)$.
	
	We say that 
	$$d_{GH}((X_1,x_1,A_1),(X_2,x_2,A_2))\le\delta,$$
	if there are $\delta$-approximation maps
	$$\phi:(X_1,x_1,A_1)\to (X_2,x_2,A_2), \quad \psi:(X_2,x_2,A_2)\to (X_1,x_1,A_1).$$
\end{defn}

Note that the first three conditions in Definition \ref{def_GH_subset} say that $\phi:X_1\to X_2$ is an $\delta$-approximation from $(X_1,x_1)$ to $(X_2,x_2)$ in the pointed Gromov-Hausdorff sense. Together with the last condition, the closure of ${\phi(A_1)}$ and $A_2$, as (pointed) closed subsets of $X_2$, are $\delta$-close in the pointed Hausdorff sense.

\begin{rem}
	In practice, we only need to check one side of the approximations in Definition \ref{def_GH_subset}. Given an $\delta$-approximation $\phi:(X_1,x_1,A_1)\to (X_2,x_2,A_2)$, then one can construct a $3\delta$-approximation $\psi:(X_2,x_2,A_2)\to (X_1,x_1,A_1)$: for any $z_2\in B_{1/\delta}(x_2)$, by (3) there is $z_1\in B_{1/\delta}(x_1)$ such that $d(\phi(z_1),z_2)\le\delta$, then assign $\psi(z_2)$ as $z_1$.
\end{rem}

\begin{rem}
	Though $d_{GH}$ as in Definition \ref{def_GH_subset} may not satisfy the triangle inequality, a weaker estimate always holds:
	\begin{align*}
		&\ d_{GH}((X_1,x_1,A_1),(X_3,x_3,A_3))\\
		\le&\ 2d_{GH}((X_1,x_1,A_1),(X_2,x_2,A_2))+2d_{GH}((X_2,x_2,A_2),(X_3,x_3,A_3)).
	\end{align*}
\end{rem}

\begin{rem}\label{rem_subset_precpt}
For a sequence $(X_i,x_i)$ converging to $(X,x)$ with a sequence of closed subsets $A_i\subseteq X_i$ containing $x_i$, we can always extract a subsequence so that $$(X_{i(j)},x_{i(j)},A_{i(j)})\overset{GH}\longrightarrow (X,x,A)$$ 
in the sense of Definition \ref{def_GH_subset}, where $A$ is some closed subset of $X$ containing $x$. To see this precompactness result, one can consider the closure of $\phi_i(A_i)$ in $X_i$, where $\phi_i:X_i\to X$ are $\delta_i$-approximation maps with $\delta_i\to 0$, then the sequence $\{(\overline{\phi_i(A_i)},\phi(x_i))\}$ subconverges in the pointed Hausdorff sense to some limit $(A,x)$.
\end{rem}

\begin{rem}
	For a pointed equivariant Gromov-Hausdorff convergent sequence
	$$(X_i,x_i,G_i)\overset{GH}\longrightarrow (X,x,G),$$
	by definition we have corresponding convergence of orbits $G_ix_i$ to $Gx$ in the sense of Definition \ref{def_GH_subset}:
	$$(X_i,x_i,G_ix_i)\overset{GH}\longrightarrow (X,x,Gx).$$
\end{rem}

\section{Almost geodesic limit orbits}\label{sec_geo}
Through this section and beyond, we assume that $(M,p)$ is an open $n$-manifold with nonnegative Ricci curvature and an infinite fundamental group $\Gamma=\pi_1(M,p)$; we also always assume that $E(M,p)\le\epsilon$ unless otherwise noted, where $\epsilon>0$ is a small number that will be determined through the text. To avoid ambiguities, symbol $\epsilon$ will be exclusively used for this purpose. $\delta_\epsilon$ means a constant only depending on $\epsilon$ with $\lim_{\epsilon\to 0} \delta_\epsilon=0$.

In this section, we prove that in any $(Y,y,G)\in\Omega(\widetilde{M},\Gamma)$, the orbit $Gy$ is $\delta_\epsilon$-geodesic (Proposition \ref{limit_delta_geodesic}). We also prove some properties for $\delta_\epsilon$-geodesic subsets for later use.

For a closed subset $N$ in a length space $X$, we say that $N$ is \textit{geodesic} in $X$, if the extrinsic and intrinsic metrics on $N$ agree. To quantify this, we introduce the notion \textit{$\delta$-geodesic}.

\begin{defn}\label{def_delta_geo}
	Let $\delta>0$. Let $X$ be a length space and let $N$ be a closed subset of $X$. We say that $N$ is \textit{$\delta$-geodesic in $X$}, if for any two points $a,b\in N$, there is a chain of points $z_0=a,...,z_j,...,z_k=b$ in $N$ such that
	$$\sum_{j=1}^{k} d(z_{j-1},z_{j})\le (1+\delta)\cdot d(a,b)\quad \text{and}\quad d(z_{j-1},z_{j})\le\delta\cdot d(a,b) \text{ for all } j=1,...,k.$$
\end{defn}

Note that being $\delta$-geodesic is scaling invariant.

Since $\Gamma$ acts freely on $\widetilde{M}$, we can assign a natural metric $\rho$ on $\Gamma$ by
$$\rho(\gamma,\gamma')=d(\gamma\tilde{p},\gamma'\tilde{p}).$$

\begin{lem}\label{pre_limit_delta_geodesic}
	For any $\gamma\in\Gamma$ with $\rho(e,\gamma)$ sufficiently large, there are $\gamma_1,...,\gamma_k\in\Gamma$ such that the following holds:\\
	(1) $\prod_{j=1}^k \gamma_j=\gamma$,\\
	(2) $\sum_{j=1}^k \rho(e,\gamma_j)\le (1+\delta_\epsilon)\cdot \rho(e,\gamma)$,\\
	(3) $\rho(e,\gamma_j)\le \delta_\epsilon \cdot \rho(e,\gamma)$ for all $j=1,...,k$,\\
	where $\delta_\epsilon\to 0$ as $\epsilon\to 0$.
\end{lem}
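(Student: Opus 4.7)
The hypothesis $E(M,p)\le\epsilon$ gives, via the $\limsup$ definition, a length threshold $L_0$ beyond which every minimal representing loop $c_\gamma$ is contained in $B_{2\epsilon|\gamma|}(p)$ (absorbing a small additive slack into the factor $2$). My plan is to subdivide $c_\gamma$ into $k$ arclength-parametrized subarcs and close each subarc up to a based loop by attaching short minimizing geodesics from $p$ to the subdivision points; each such connecting segment has length at most $2\epsilon|\gamma|$ by the escape bound. Each resulting loop will then have length roughly $|\gamma|/k + 2\epsilon|\gamma|$, and the telescoping cancellation in the concatenation will make the sum of the loop lengths exceed $|\gamma|$ by only the overhead $O(k\epsilon|\gamma|)$ from the connectors (each connector is used twice, once in $\gamma_j$ and once in $\gamma_{j+1}$, with opposite orientations). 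Balancing piece-length ($1/k$) against sum-overhead ($k\epsilon$) forces $k\sim 1/\sqrt{\epsilon}$ and ultimately $\delta_\epsilon = O(\sqrt{\epsilon})$.

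Concretely, parametrize $c_\gamma:[0,|\gamma|]\to M$ by arclength, set $k=\lceil 1/\sqrt{\epsilon}\,\rceil$, $t_j = j|\gamma|/k$, and $x_j = c_\gamma(t_j)$ (so $x_0 = x_k = p$); for $1\le j\le k-1$ choose a minimizing geodesic $\sigma_j$ in $M$ from $p$ to $x_j$, and let $\sigma_0,\sigma_k$ be constant. Define $\gamma_j\in \pi_1(M,p)$ as the based-loop class of the concatenation $\sigma_{j-1}\cdot c_\gamma|_{[t_{j-1},t_j]}\cdot \sigma_j^{-1}$. The internal $\sigma_j\sigma_j^{-1}$ pairs cancel in $\prod_{j=1}^k\gamma_j$, leaving the class of $c_\gamma$ itself; this is condition (1).

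For conditions (2) and (3), use $\rho(e,\gamma_j)\le\mathrm{length}(\text{constructed loop for }\gamma_j)$. The individual bound
\[\rho(e,\gamma_j)\le \tfrac{|\gamma|}{k}+2\cdot 2\epsilon|\gamma| = O(\sqrt{\epsilon})\cdot\rho(e,\gamma)\]
gives (3), while telescoping the connector contributions gives
\[\sum_{j=1}^k \rho(e,\gamma_j)\le |\gamma| + 2\sum_{j=1}^{k-1} d(p,x_j)\le |\gamma|+4(k-1)\epsilon|\gamma|=\bigl(1+O(\sqrt{\epsilon})\bigr)\rho(e,\gamma),\]
which is (2). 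Taking, say, $\delta_\epsilon = 5\sqrt{\epsilon}$ suffices once $\rho(e,\gamma)\ge L_0$.

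\textbf{Main obstacle.} There is no deep geometric obstacle; the argument is elementary length-balancing. The only care required is: (i) to invoke the $\limsup$ definition of $E(M,p)$ properly, which is exactly why the statement only applies for $\rho(e,\gamma)$ large; and (ii) to tune $k$ so that both $1/k\to 0$ and $k\epsilon\to 0$ as $\epsilon\to 0$. The real work using this lemma will come in Proposition \ref{limit_delta_geodesic}, where this discrete decomposition must be pushed through an equivariant Gromov--Hausdorff limit to show that the limit orbit $Gy$ is $\delta_\epsilon$-geodesic in the sense of Definition \ref{def_delta_geo}.
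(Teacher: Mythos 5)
Your proposal is correct and is essentially the paper's argument: you subdivide $c_\gamma$ at $k\sim 1/\sqrt{\epsilon}$ roughly equispaced points, close up each arc with short geodesics from $p$ (of length $\lesssim\epsilon|\gamma|$ by the escape bound), and balance the piece-length $\sqrt{\epsilon}|\gamma|$ against the total connector overhead $\sim k\epsilon|\gamma|\sim\sqrt{\epsilon}|\gamma|$ to get $\delta_\epsilon=O(\sqrt{\epsilon})$. The paper phrases this in the universal cover $\widetilde{M}$ by lifting $c_\gamma$ and choosing nearby orbit points $\beta_j\tilde p$ with $\gamma_j=\beta_{j-1}^{-1}\beta_j$, but this is just the deck-transformation translation of your based-loop concatenation, so the two are the same argument.
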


\begin{proof}
	The proof is similar to that of \cite[Proposition 2.2]{Pan_escape}.
	
	We put $\Gamma(R)=\{\gamma\in \Gamma | d(\tilde{p},\gamma\tilde{p})\le R \}$ and $$D(R)=\max_{\gamma\in \Gamma(R)}d_H(x,c_\gamma).$$ It follows from $E(M,p)\le\epsilon$ that 
	$$\limsup_{R\to\infty} \dfrac{D(R)}{R}\le\epsilon.$$
	For $\eta>0$, which we will determine later, we define $s(\eta,R)=2(\eta^{-1}+1)\cdot D(R)$. 
	
	Let $\gamma\in\Gamma$ with $R=\rho(e,\gamma)$ and let $c$ be a representing geodesic loop of $\gamma$. It is clear that $c$ is contained in $\overline{B_{D(R)}}(p)$. Let $\tilde{c}$ be the lift of $c$ starting at $\tilde{p}$; we can assume that $\tilde{c}:[0,R]\to\widetilde{M}$ is of unit speed. Following the same argument as in the proof of Proposition 2.2 in \cite{Pan_escape}, we can choose a series of points $\{\tilde{c}(t_j)\}_{j=1}^k$ such that $t_0=0$, $t_k=R$ and
	$$t_j-t_{j-1}=2\eta^{-1}D(R) \text{ for all }j=1,...,k-1,\quad t_k-t_{k-1}\le 2\eta^{-1}D(R).$$
	We also choose $\beta_0=e$, $\beta_k=\gamma$, and $\beta_j\in\Gamma$ such that $d(\beta_j\tilde{p},\tilde{c}(t_j))\le D(R)$ for $j=1,...,k-1$. Then $\{\gamma_j=\beta^{-1}_{j-1}\beta_j\}_{j=1}^k$ satisfies $\prod_{j=1}^k \gamma_j=\gamma$,
	$$\sum_{j=1}^k \rho(e,\gamma_j)\le (1+\eta)\rho(e,\gamma),$$
	and 
	$$\rho(e,\gamma_j)\le s(\eta,R)\le 4(\eta^{-1}+1)\epsilon\cdot \rho(e,\gamma)$$
	when $R$ is sufficiently large. Setting $\eta=\sqrt{\epsilon}$ and $\delta_\epsilon=4(\sqrt{\epsilon}+\epsilon)$, we complete the proof.
\end{proof}

\begin{prop}\label{limit_delta_geodesic}
	For any $(Y,y,G)\in \Omega(\widetilde{M},\Gamma)$, the orbit $Gy$ is $\delta_\epsilon$-geodesic in $Y$, where $\delta_\epsilon\to 0$ as $\epsilon\to 0$.
\end{prop}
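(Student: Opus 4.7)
The plan is to transfer the discrete chain produced by Lemma \ref{pre_limit_delta_geodesic} on $(\widetilde M, \Gamma)$ to the limit $(Y,y,G)$ via equivariant Gromov--Hausdorff convergence. Fix a sequence $r_i \to \infty$ realizing $(r_i^{-1}\widetilde M, \tilde p, \Gamma) \overset{GH}{\longrightarrow} (Y,y,G)$ and pick two points $a,b \in Gy$. If $a = b$ the trivial chain suffices, so assume $d_Y(a,b) > 0$. By equivariant convergence, there exist $\alpha_i, \beta_i \in \Gamma$ with $r_i^{-1}\alpha_i \tilde p \to a$ and $r_i^{-1}\beta_i \tilde p \to b$. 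Setting $\gamma_i := \alpha_i^{-1}\beta_i$, we have $\rho(e, \gamma_i)/r_i \to d_Y(a,b) > 0$, and in particular $\rho(e, \gamma_i) \to \infty$, so Lemma \ref{pre_limit_delta_geodesic} applies for all large $i$.

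Writing $\gamma_i = \gamma_{i,1} \cdots \gamma_{i,k_i}$ as in the lemma, I would define the lifted chain
$$z_{i,j} := \alpha_i \gamma_{i,1} \cdots \gamma_{i,j}\,\tilde p \in \Gamma\tilde p, \qquad j = 0, 1, \ldots, k_i,$$
so that $z_{i,0} = \alpha_i\tilde p$, $z_{i,k_i} = \beta_i\tilde p$, and $d(z_{i,j-1}, z_{i,j}) = \rho(e, \gamma_{i,j})$. The properties (1)--(3) of the lemma translate into telescoped control of $\sum_j d(z_{i,j-1}, z_{i,j})$ and of each individual gap, in terms of $\rho(e,\gamma_i)$.

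The main obstacle is that $k_i$ need not be bounded in $i$, which prevents a direct diagonal extraction of a limit chain. To fix this I would \emph{consolidate} the chain: choose indices $0 = j_0 < j_1 < \cdots < j_{m_i} = k_i$ greedily so that for each $s$ the cumulative sum $\sum_{l = j_{s-1}+1}^{j_s} \rho(e,\gamma_{i,l})$ lies between $\tfrac12 \delta_\epsilon \rho(e,\gamma_i)$ and $\delta_\epsilon \rho(e,\gamma_i)$ (the last block may be shorter). The triangle inequality then gives $d(z_{i,j_{s-1}}, z_{i,j_s}) \le \delta_\epsilon \rho(e,\gamma_i)$ while
$$\sum_{s=1}^{m_i} d(z_{i,j_{s-1}}, z_{i,j_s}) \le \sum_{l=1}^{k_i} \rho(e, \gamma_{i,l}) \le (1+\delta_\epsilon)\, \rho(e,\gamma_i),$$
and the greedy choice forces $m_i \le 2(1+\delta_\epsilon)/\delta_\epsilon =: K$, a bound depending only on $\epsilon$.

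With $m_i$ uniformly bounded, pass to a subsequence on which $m_i$ equals some fixed $m \le K$ and, for each $s$, the rescaled points $r_i^{-1} z_{i,j_s}$ converge in $Y$ to points $w_s$. Because $\Gamma\tilde p$ converges as a closed subset to $Gy$ under the equivariant GH convergence (Remark after Definition \ref{def_GH_subset}), each $w_s \in Gy$, with $w_0 = a$ and $w_m = b$. Dividing the inequalities above by $r_i$ and taking $i\to\infty$ yields
$$d(w_{s-1},w_s) \le \delta_\epsilon\, d_Y(a,b), \qquad \sum_{s=1}^{m} d(w_{s-1},w_s) \le (1+\delta_\epsilon)\, d_Y(a,b),$$
which verifies Definition \ref{def_delta_geo} and completes the proof. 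The only delicate point is the consolidation step that tames the unbounded combinatorics of the lemma's factorization; everything else is a standard passage to the equivariant limit.
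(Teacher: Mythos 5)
Your proposal is correct and follows essentially the same route as the paper: apply Lemma \ref{pre_limit_delta_geodesic} to get a long word, consolidate consecutive factors into boundedly many blocks via a cumulative-length threshold, then pass to the equivariant limit (the paper phrases the consolidation as $\delta_\epsilon R_i \le \rho(e,g_{i,j})\le 2\delta_\epsilon R_i$ and fixes $a=y$, $b=gy$, reducing to that case by isometry). One small slip: with your greedy threshold $\tfrac12\delta_\epsilon\rho(e,\gamma_i)$ and individual gaps up to $\delta_\epsilon\rho(e,\gamma_i)$, a block's cumulative sum lands in $[\tfrac12\delta_\epsilon\rho,\tfrac32\delta_\epsilon\rho]$ rather than $[\tfrac12\delta_\epsilon\rho,\delta_\epsilon\rho]$, so the chain gaps and the bound on $m_i$ carry an extra constant factor — harmless, since the conclusion only needs some $\delta_\epsilon\to 0$.
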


\begin{proof}
	Let $r_i\to\infty$ such that
	$$(r_i^{-1}\widetilde{M},\tilde{p},\Gamma)\overset{GH}\longrightarrow (Y,y,G).$$
	Let $g\in G$ with $gy\not= y$ and let $\gamma_i\in \Gamma$ converging to $g$ associated to the above convergence. We put $R_i=d(\gamma_i\tilde{p},\tilde{p})\to\infty$. It follows from Lemma \ref{pre_limit_delta_geodesic} that for any $i$ large, we can find a word $\prod_{j=1}^{k_i} \gamma_{i,j}=\gamma_i$ with
	$$\sum_{j=1}^{k_i} \rho(e,\gamma_{i,j})\le (1+\delta_\epsilon)\cdot R_i,\quad \rho(e,\gamma_{i,j})\le \delta_\epsilon\cdot R_i.$$
	We group successive portions of the word $\prod_{j=1}^{k_i} \gamma_{i,j}$ into a new word $\prod_{j=1}^{K_i} g_{i,j}=\gamma_i$ such that
	$$\delta_\epsilon R_i\le\rho(e,g_{i,j})\le 2\delta_\epsilon R_i.$$
	It is clear that
	$$\sum_{j=1}^{K_i}\rho(e,g_{i,j})\le (1+\delta_\epsilon)\cdot R_i,\quad K_i\le (1+\delta_\epsilon)/\delta_\epsilon.$$
	Passing to a subsequence, we assume that all $K_i$ are the same and each sequence $\{g_{i,j}\}_i$ converges to $g_j\in G$ associated to $(r_i^{-1}\widetilde{M},\tilde{p},\Gamma)\overset{GH}\longrightarrow (Y,y,G)$. Then 
	$$d_Y(g_jy,y)=\lim\limits_{i\to\infty} r_i^{-1}\rho(e,g_{i,j})\le r_i^{-1}\cdot 2\delta_\epsilon R_i=2\delta_\epsilon\cdot d_Y(gy,y),$$
	$$\sum_{j=1}^K d_Y(g_jy,y)=\lim\limits_{i\to\infty} \sum_{j=1}^K r_i^{-1}\rho(e,g_{i,j})\le\lim\limits_{i\to\infty} r_i^{-1}(1+\delta_\epsilon)R_i=(1+\delta_\epsilon)d_Y(gy,y).$$
	This shows that the limit orbit $Gy$ is $2\delta_\epsilon$-geodesic in $Y$.
\end{proof}

\begin{cor}\label{cor_cnt_orbit}
	Given that $\epsilon>0$ is sufficiently small, the orbit $Gy$ is connected for any $(Y,y,G)\in\Omega(\widetilde{M},\Gamma)$.
\end{cor}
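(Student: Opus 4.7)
The plan is to argue by contradiction. Suppose $Gy$ is disconnected, and let $G_0\subseteq G$ denote the identity component. Since the excerpt in Section 1.2 tells us that $G/G_0$ is discrete and there is a positive distance between distinct components of the orbit, the quantity
$$\beta:=\inf\{d_Y(y,z) : z\in Gy\setminus G_0y\}$$
is strictly positive. I would first observe that by equivariance and the normality of $G_0$ in $G$, any two distinct components $h_1G_0y$ and $h_2G_0y$ of $Gy$ satisfy
$$d_Y(h_1G_0y,\,h_2G_0y)=d_Y(y,\,h_1^{-1}h_2G_0y)\ge\beta,$$
because $h_1^{-1}h_2 G_0 y$ is then a non-identity component (using $G_0 h_1^{-1}h_2=h_1^{-1}h_2G_0$).

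Next I would apply Proposition \ref{limit_delta_geodesic} in a sharp way. For any small $\eta>0$, pick $b\in Gy\setminus G_0y$ with $d_Y(y,b)\le(1+\eta)\beta$; such a $b$ exists by the definition of $\beta$. Apply the $\delta_\epsilon$-geodesic property to the pair $(a,b)=(y,b)$ to obtain a chain $y=z_0,z_1,\dots,z_k=b$ in $Gy$ with $d_Y(z_{j-1},z_j)\le\delta_\epsilon\cdot d_Y(y,b)$ for every $j$. Since $z_0$ lies in the identity component and $z_k$ does not, there must be some index $j$ at which $z_{j-1}$ and $z_j$ belong to different components of $Gy$. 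For that index, the first step of the proof gives $d_Y(z_{j-1},z_j)\ge\beta$. Combining,
$$\beta\le d_Y(z_{j-1},z_j)\le \delta_\epsilon\cdot d_Y(y,b)\le\delta_\epsilon(1+\eta)\beta.$$
Letting $\eta\to 0$ yields $\delta_\epsilon\ge 1$, which contradicts $\delta_\epsilon\to 0$ as $\epsilon\to 0$. Hence for $\epsilon$ small enough so that $\delta_\epsilon<1$, the orbit $Gy$ cannot be disconnected.

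I do not expect any serious obstacle here; the argument is essentially a clean rescaling-type observation. The only mildly nontrivial point is justifying that all distinct pairs of components (not just those containing $y$) are separated by at least $\beta$, which is handled by the short equivariance/normality computation above. Everything else follows formally from Proposition \ref{limit_delta_geodesic} and the already-quoted Lie-theoretic input that $G/G_0$ is discrete.
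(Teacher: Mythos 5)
Your proof is correct and takes essentially the same approach as the paper: both argue by contradiction using the $\delta_\epsilon$-geodesic chain from Proposition~\ref{limit_delta_geodesic} together with the positive separation between distinct components of $Gy$. Your version makes the equivariance/normality step for $G_0$ explicit and works with the infimum $\beta$ rather than implicitly assuming a minimizer, but the structure and the source of the contradiction are the same.
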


\begin{proof}
	We choose $\epsilon>0$ small so that $\delta_\epsilon\le 0.1$, where $\delta_\epsilon$ is the constant in Proposition \ref{limit_delta_geodesic}.
	
	We argue by contradiction. Suppose that the orbit $Gy$ is not connected for some $(Y,y,G)\in\Omega(\widetilde{M},\Gamma)$. Let $C$ be the connected component of $Gy$ containing $y$. We choose an orbit point $hy\in Gy-C$ such that
	$$d(hy,y)=\min_{z\in Gy-C}d(z,y)=\min_{z\in Gy-C} d(z,C).$$ 
	By Proposition \ref{limit_delta_geodesic}, there is a chain of orbit points $z_0=y,...,z_j,...,z_k=hy$ such that $d(z_{j-1},z_j)\le 0.1\cdot d(hy,y)$. 
	
	We claim that $z_1\in C$. Otherwise, we have $$0<d(C,z_1)\le d(z_0,z_1)\le 0.1\cdot d(hy,y);$$  
	this cannot happen due to our choice of $hy$.
	
	Inductively, we have $z_j\in C$ for all $j$. A contradiction.
\end{proof}

For the rest of this section, we prove some lemmas on convergence of $\delta$-geodesic subsets, which will be used in the next section.

\begin{lem}\label{to_geodesic}
	Let $(Y_i,y_i)$ be a sequence of complete pointed length spaces and $N_i$ be a closed subset of $Y_i$ containing $y_i$ for each $i$. Suppose that
	$$(Y_i,y_i,N_i)\overset{GH}\longrightarrow (Y,y,N)$$
	and $N_i$ is $\delta_i$-geodesic in $Y_i$, where $\delta_i\to 0$. Then $N$ is geodesic in $Y$.
\end{lem}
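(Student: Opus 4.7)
The plan is to prove that $N$ has the \emph{midpoint property} and then invoke the classical fact that a complete metric space with midpoints is a geodesic space. Concretely, I aim to show: for every pair $a, b \in N$ there is $m \in N$ with $d_Y(a,m) = d_Y(m,b) = d_Y(a,b)/2$. Once midpoints exist, the completeness of $N$ (inherited as a closed subset of the complete length space $Y$) upgrades this to the existence of a length-minimizing path in $N$ joining any two points of $N$, which is exactly what it means for the intrinsic and extrinsic metrics on $N$ to agree, i.e., for $N$ to be geodesic in $Y$.

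The construction of approximate midpoints combines the closed-subset convergence with the $\delta_i$-geodesic hypothesis. Fix $a, b \in N$ and, using Definition \ref{def_GH_subset}(4) applied via the reverse approximation, pick $a_i, b_i \in N_i$ with $\phi_i(a_i) \to a$ and $\phi_i(b_i) \to b$ (in particular $d_{Y_i}(a_i, b_i) \to d_Y(a, b)$). For each sufficiently large $i$, apply the $\delta_i$-geodesic property to produce a chain $a_i = z_0^{(i)}, z_1^{(i)}, \dots, z_{k_i}^{(i)} = b_i$ in $N_i$ of total length at most $(1+\delta_i) d(a_i, b_i)$ with each step of size at most $\delta_i d(a_i, b_i)$. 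Because the mesh of partial sums is at most the mesh of the chain, I can select an index $j^\ast(i)$ with $\sum_{j=1}^{j^\ast(i)} d(z_{j-1}^{(i)}, z_j^{(i)})$ within $\delta_i d(a_i, b_i)$ of $d(a_i, b_i)/2$. Setting $m_i := z_{j^\ast(i)}^{(i)} \in N_i$ then yields $d(a_i, m_i) \le d(a_i, b_i)/2 + \delta_i d(a_i, b_i)$ and symmetrically $d(m_i, b_i) \le d(a_i, b_i)/2 + 2\delta_i d(a_i, b_i)$.

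The key step — and the one I expect to be the main obstacle — is extracting a limit $m$ of the $m_i$'s and verifying that $m \in N$, not merely $m \in Y$. The distances $d(m_i, y_i)$ are uniformly bounded, so after passing to a subsequence the images $\phi_i(m_i)$ converge in $Y$ to some $m$; continuity of the distance then gives $d(a, m), d(m, b) \le d(a, b)/2$, and the triangle inequality forces equality, so $m$ is an exact midpoint of $a$ and $b$. To upgrade $m \in Y$ to $m \in N$, I would use the reverse approximation $\psi_i$ guaranteed by Definition \ref{def_GH_subset} (or equivalently the remark following it, which promotes the one-sided condition (4) to a two-sided Hausdorff closeness of $\phi_i(N_i)$ and $N$ on bounded balls). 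This places $\phi_i(m_i)$ in a $\Phi(\delta_i)$-neighbourhood of $N$, and closedness of $N$ then forces $m \in N$. With the midpoint property established, the lemma follows from the general principle invoked in the first paragraph.
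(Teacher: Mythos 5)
Your proof is correct and proceeds by a genuinely different, though closely related, route. The paper, for each fixed $\eta>0$, passes to a limiting sub-chain $\{w_j\}\subseteq N$ with mesh at most $\eta$ and total length at most $d(a,b)$, which directly witnesses the equality of the intrinsic and extrinsic metrics on $N$. You instead extract only a single midpoint from each $\delta_i$-geodesic chain, pass to a limit $m\in N$, conclude $m$ is an exact midpoint of $a,b$, and then invoke the classical theorem that a complete metric space with midpoints is geodesic (Burago--Burago--Ivanov, Thm.\ 2.4.16); since a geodesic subspace certainly has intrinsic metric equal to the restricted metric, this gives that $N$ is geodesic in $Y$ in the paper's sense. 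Your route is a little cleaner in that you control one limiting point rather than an entire chain, so you avoid the bookkeeping of forcing the chain lengths $K(i)$ to stabilize along a subsequence, at the cost of appealing to a black-box theorem whose own proof is internally a dyadic chain argument. Two small remarks. First, extracting the limit $m$ from the bounded sequence $\phi_i(m_i)$ implicitly uses that bounded sets in $Y$ are precompact (properness); the paper's chain-limit step shares this hidden assumption, and it holds in all applications here since $Y\in\mathcal{M}(n,0)$, but it is not among the lemma's stated hypotheses. Second, if one wished to avoid properness, you can stop short of an exact midpoint: the points of $N$ that condition (4) of Definition~\ref{def_GH_subset} (for the reverse approximation) places within $\Phi(\delta_i)$ of $\phi_i(m_i)$ already furnish $\varepsilon$-midpoints in $N$ for every $\varepsilon>0$, and a complete metric space with $\varepsilon$-midpoints for all $\varepsilon>0$ is a length space, which is precisely what ``geodesic in $Y$'' requires.
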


\begin{proof}
	Let $a,b\in N$. From the convergence, we can choose points $a_i$ and $b_i$ in $N_i$ converging to $a$ and $b$ respectively. Because each $N_i$ is $\delta_i$-geodesic in $Y_i$, there is a series of points $z_{i,0}=a_i,...,z_{i,j},...,z_{i,k(i)}=b_i$ in $N_i$ such that
	$$\sum_{j=1}^{k(i)} d(z_{i,j-1},z_{i,j})\le (1+\delta_i) d(a_i,b_i),\quad d(z_{i,j-1},z_{i,j})\le\delta_id(a_i,b_i)\to 0.$$
	For any $\eta>0$, we choose a series of points $\{w_{i,j}\}_{j=0}^{K(i)}$ from $\{z_{i,j}\}_{j=0}^{k(i)}$ such that $w_{i,0}=z_{i,0}$, $w_{i,K(i)}=z_{i,k(i)}$, and
	$$\eta/2\le d(w_{i,j-1},w_{i,j})\le \eta$$
	for all $j=1,...,K(i)$. Note that 
	$$\sum_{j=0}^{K(i)} d(w_{i,j-1},w_{i,j})\le (1+\delta_i)d(a_i,b_i),\quad K(i)\le (1+\delta_i)d(a_i,b_i)/(\eta/2)\to 2d(a,b)/\eta.$$
	Passing to a subsequence, we assume that all $K(i)$ are the same $K$ and each sequence $\{w_{i,j}\}_i$ converges to $w_j\in N$. Then $\{w_j\}_{j=1}^K\subseteq N$ satisfies
	$$\sum_{j=0}^{K}d(w_{j-1},w_{j})\le d(a,b),\quad d(w_{j-1},w_{j})\le\eta \text{ for all } j=1,...,K.$$
	This shows that $N$ is geodesic in $Y$. 
\end{proof}
 
The following result in \cite[Lemma 3.1]{Pan_escape} characterizes any closed and geodesic subset in a metric product $\mathbb{R}^k\times X$ that contains a slice of $\mathbb{R}^k$. 

\begin{lem}\label{geo_in_product}
	Let $X$ be a locally compact length metric space. Let $N$ be a closed subset in the product metric space $\mathbb{R}^k\times X$, where $\mathbb{R}^k$ is endowed with the standard Euclidean metric. Suppose that $N$ is geodesic in $\mathbb{R}^k\times X$ and $N$ contains a slice $\mathbb{R}^k\times \{x\}$ for some $x\in X$. Then $N$ equals $\mathbb{R}^k\times Z$ as a subset of $\mathbb{R}^k\times X$, where
	$$Z=N \cap (\{0\}\times X);$$
	in particular, $N$ is a product metric.
\end{lem}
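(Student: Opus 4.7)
The real content is to show that for every $(v,z)\in N$ and every $v'\in\mathbb{R}^k$, also $(v',z)\in N$. Granted this slice-invariance, every $(v,z)\in N$ forces $(0,z)\in N$ so $z\in Z$, and conversely each $z\in Z$ gives a whole slice in $N$, producing $N=\mathbb{R}^k\times Z$.

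To prove slice-invariance, fix $(v,z)\in N$ and $v'\neq v$, and set $A=|v'-v|$, $u=(v'-v)/A$, $d=d_X(z,x)$. My strategy is to push a target point $(v+Ru,x)$ (which lies in $N$ by the slice hypothesis) out to infinity in the $u$-direction and force nearly-geodesic chains in $N$ from $(v,z)$ to $(v+Ru,x)$ to sweep through a neighborhood of $(v',z)$. For each large $R$ and small $\eta>0$, I will use the geodesic hypothesis ``intrinsic $=$ extrinsic'' on $N$, together with a standard refinement, to pick a chain $z_0=(v,z),z_1,\ldots,z_K=(v+Ru,x)$ in $N$ with $\sum_j d(z_{j-1},z_j)\le D_R+\eta$ and $\max_j d(z_{j-1},z_j)\le\eta$, where $D_R=\sqrt{R^2+d^2}$.

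The next task is to isolate a chain vertex $z_{j^*}=(v_{j^*},y_{j^*})$ close to $(v',z)$. Tracking the scalar projection $\phi_j=(v_j-v)\cdot u$, which jumps by at most $\eta$ from $0$ to $R$, a discrete intermediate-value selection yields $j^*$ with $|\phi_{j^*}-A|\le\eta$. Applying the triangle inequality to the prefix and suffix of the chain, and inserting the product-metric identities, then produces
\[
\sqrt{(A-\eta)^2+d_1^2}+\sqrt{(R-A-\eta)^2+d_2^2}\le D_R+\eta,\qquad d_1+d_2\ge d,
\]
where $d_1=d_X(y_{j^*},z)$ and $d_2=d_X(y_{j^*},x)$. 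Reading the left side as $|(A-\eta,d_1)|+|(R-A-\eta,d_2)|$ in $\mathbb{R}^2$ and using $|p|+|q|\ge|p+q|$, a two-dimensional convexity argument pins $d_1=Ad/R+o_\eta(1)$ as $\eta\to 0$. In parallel, a Cauchy--Schwarz estimate applied to the nonnegative defects $\alpha_i-|\sigma_i|$ (with $\alpha_i=|v_i-v_{i-1}|$ and $\sigma_i=(v_i-v_{i-1})\cdot u$) yields $\sum_i |(v_i-v_{i-1})-\sigma_i u|=O(\sqrt{\eta R})$, which controls the component of $v_{j^*}-v'$ perpendicular to $u$.

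Selecting $\eta_R$ to vanish fast in $R$ (e.g.\ $\eta_R=R^{-3}$), the chain vertex $z_{j^*}^R\in N$ therefore satisfies $|v_{j^*}^R-v'|\to 0$ and $d_X(y_{j^*}^R,z)\le Ad/R+o(1)\to 0$ as $R\to\infty$. Because $N$ is closed, $(v',z)\in N$. The main obstacle will be the careful bookkeeping in the displayed inequality so that the $\eta\to 0$ limit simultaneously kills the perpendicular deviation and forces $d_1$ to $Ad/R$, after which $R\to\infty$ sends the residual $Ad/R$ to zero. A smoother alternative, viable when $\mathbb{R}^k\times X$ is proper (as holds in the paper's Ricci-limit applications), is to upgrade near-minimizing chains to genuine ambient geodesics contained in $N$ via Arzel\`a--Ascoli plus closedness of $N$, and then read off the required point directly from the explicit product form of a geodesic in $\mathbb{R}^k\times X$.
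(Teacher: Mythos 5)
Your overall plan is sound and the theorem does follow from it: once you know slice-invariance (for each $(v,z)\in N$ and each $v'\in\mathbb{R}^k$, also $(v',z)\in N$), the identity $N=\mathbb{R}^k\times Z$ is immediate, and since $N$ is geodesic, the intrinsic metric coincides with the restriction of the ambient product metric. Your ``smoother alternative'' is in fact the clean and standard route for the paper's setting: when $\mathbb{R}^k\times X$ is proper, near-minimizers in $N$ from $(v,z)$ to $(v+Ru,x)$ subconverge via Arzel\`a--Ascoli to a genuine minimizing geodesic, which necessarily lies in $N$ (closed) and, being an ambient minimizer in the product, has the explicit form $t\mapsto(v+tRu,\sigma(t))$; evaluating at $t=A/R$ gives points of $N$ converging to $(v',z)$ as $R\to\infty$. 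The paper itself does not reprove the lemma here (it cites \cite[Lemma 3.1]{Pan_escape}), so there is no internal proof to compare with, but your geodesic-based alternative matches the natural approach.

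The chain-based version, however, has a concrete gap in the perpendicular-deviation step. You claim that Cauchy--Schwarz on the defects $\alpha_i-|\sigma_i|$ gives $\sum_i|(v_i-v_{i-1})-\sigma_i u|=O(\sqrt{\eta R})$. Tracing it through: $\sum_i\sqrt{\alpha_i^2-\sigma_i^2}\le\sqrt{\sum_i(\alpha_i-|\sigma_i|)}\cdot\sqrt{\sum_i(\alpha_i+|\sigma_i|)}$, and the only immediate bound on the first factor is $\sum_i\alpha_i-\sum_i|\sigma_i|\le(D_R+\eta)-R\approx d^2/(2R)+\eta$. With $\sum_i(\alpha_i+|\sigma_i|)\lesssim 2R$, this produces $O(\sqrt{d^2+\eta R})$, not $O(\sqrt{\eta R})$: the $d$-term does \emph{not} vanish, and with $\eta_R=R^{-3}$ you are left with a perpendicular drift of size $\approx d$ that does not shrink. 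That would leave you only with \emph{some} $(v'',z)\in N$ having $(v''-v)\cdot u=A$ but $|v''-v'|\lesssim d$, not the desired $(v',z)$.

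The estimate you want is nonetheless true, but you must use the $X$-travel to cancel the $d^2/(2R)$ excess. One clean way: set $\tau_i=\sqrt{\alpha_i^2-\sigma_i^2}$, $\beta_i=d_X(y_{i-1},y_i)$, and note that the $\mathbb{R}^3$-vectors $\mathbf{a}_i=(\sigma_i,\tau_i,\beta_i)$ satisfy $|\mathbf{a}_i|=\sqrt{\alpha_i^2+\beta_i^2}$, which is exactly the $i$-th step length. Then, with $W=\sum_i\tau_i$ and $B=\sum_i\beta_i\ge d$,
\begin{equation*}
\sqrt{R^2+W^2+d^2}\le\sqrt{R^2+W^2+B^2}=\Bigl|\sum_i\mathbf{a}_i\Bigr|\le\sum_i|\mathbf{a}_i|\le\sqrt{R^2+d^2}+\eta,
\end{equation*}
whence $W^2\le 2\eta\sqrt{R^2+d^2}+\eta^2=O(\eta R)$ as claimed. (Equivalently, first apply the $\mathbb{R}^2$-triangle inequality to $(\alpha_i,\beta_i)$ to show $\sum_i\alpha_i\le R+O(\eta)$, then your Cauchy--Schwarz on defects does give $O(\sqrt{\eta R})$.) With this repair, the rest of your chain argument goes through and yields $(v',z)\in N$.

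One minor remark: your chain argument does not seem to use local compactness of $X$, whereas the Arzel\`a--Ascoli alternative relies on properness of $\mathbb{R}^k\times X$ (which needs $X$ complete, not merely locally compact). For the paper's applications ($X$ a Ricci-limit factor) properness holds, so either route is fine; for the literal hypotheses as stated, the chain argument is the more general one.
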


We prove a quantitative version of Lemma \ref{geo_in_product} in terms of $\delta$-geodesic subsets and Gromov-Hausdorff closeness.  
 
\begin{lem}\label{subset_almost_product}
	Let $(X,x)$ be a pointed locally compact length space and let $(Y,y)\in\mathcal{M}(n,0)$. Let $N$ be a closed and $\delta$-geodesic subset in $Y$ with $y\in N$. Suppose that
	$N$ has a closed subset $S$ such that
	$$d_{GH}((Y,y,S),(\mathbb{R}^k\times X,(0,x),\mathbb{R}^k\times\{x\}))\le\delta.$$
	Then there is a length space $(X',x')$, which is possibly different from $(X,x)$, and a closed geodesic subset $Z'\subseteq X'$ such that
	$$d_{GH}((Y,y,N),(\mathbb{R}^k\times X',(0,x'),\mathbb{R}^k\times Z'))\le \Phi(\delta|n).$$
\end{lem}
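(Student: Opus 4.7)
The plan is to deduce this quantitative statement from its qualitative limit counterpart (Lemma \ref{geo_in_product}) by a standard contradiction and compactness argument. Suppose the conclusion fails: there is $\epsilon_0>0$ and, for each $i$, a pointed space $(Y_i,y_i)\in\mathcal{M}(n,0)$, a closed $\delta_i$-geodesic subset $N_i\subseteq Y_i$ with $y_i\in N_i$, a closed subset $S_i\subseteq N_i$, and a pointed locally compact length space $(X_i,x_i)$ such that $\delta_i\to 0$ and
$$d_{GH}((Y_i,y_i,S_i),(\mathbb{R}^k\times X_i,(0,x_i),\mathbb{R}^k\times\{x_i\}))\le\delta_i,$$
yet no choice of $(X'_i,x'_i,Z'_i)$ places $(Y_i,y_i,N_i)$ within Gromov-Hausdorff distance $\epsilon_0$ of $(\mathbb{R}^k\times X'_i,(0,x'_i),\mathbb{R}^k\times Z'_i)$.

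Next, I would pass to limits. By Gromov's precompactness on $\mathcal{M}(n,0)$ together with the subset precompactness described in Remark \ref{rem_subset_precpt}, after extracting a diagonal subsequence I obtain simultaneous convergences
$(Y_i,y_i,N_i)\overset{GH}\to(Y_\infty,y_\infty,N_\infty)$ and $(Y_i,y_i,S_i)\overset{GH}\to(Y_\infty,y_\infty,S_\infty)$ in the sense of Definition \ref{def_GH_subset}, as well as $(X_i,x_i)\to(X_\infty,x_\infty)$. The hypothesis $\delta_i\to 0$ produces an isometry $Y_\infty=\mathbb{R}^k\times X_\infty$ with $y_\infty=(0,x_\infty)$ and identifies $S_\infty$ with the slice $\mathbb{R}^k\times\{x_\infty\}$. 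Since $N_i$ is closed and $S_i\subseteq N_i$, the inclusion persists in the limit, so $\mathbb{R}^k\times\{x_\infty\}\subseteq N_\infty$. Lemma \ref{to_geodesic} then gives that $N_\infty$ is geodesic in $Y_\infty$.

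At this point Lemma \ref{geo_in_product} applies directly (the required local compactness of $X_\infty$ follows because $Y_\infty$ is a proper Ricci limit that splits off $\mathbb{R}^k$), yielding $N_\infty=\mathbb{R}^k\times Z_\infty$ with $Z_\infty=N_\infty\cap(\{0\}\times X_\infty)$. Hence $(Y_\infty,y_\infty,N_\infty)=(\mathbb{R}^k\times X_\infty,(0,x_\infty),\mathbb{R}^k\times Z_\infty)$, and for all $i$ large the choice $(X'_i,x'_i,Z'_i):=(X_\infty,x_\infty,Z_\infty)$ brings the triples arbitrarily close, contradicting the existence of $\epsilon_0$.

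The main technical point is the limit bookkeeping: one has to arrange that $Y_i$, $N_i$, and $S_i$ all converge to objects sitting in a single ambient space $Y_\infty$, and that the inclusion $S_\infty\subseteq N_\infty$ is preserved. This is not entirely automatic because $d_{GH}$ in Definition \ref{def_GH_subset} does not satisfy the strict triangle inequality (see the remark following that definition); however, the weakened version is enough to align the three convergences after a diagonal subsequence. Once the limit picture is secured, the rest of the argument is a clean application of Lemmas \ref{to_geodesic} and \ref{geo_in_product}.
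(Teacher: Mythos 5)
Your proposal is correct and follows essentially the same route as the paper: extract a subsequence using Remark \ref{rem_subset_precpt}, identify the limit of $(Y_i,y_i,S_i)$ with $(\mathbb{R}^k\times X_\infty,(0,x_\infty),\mathbb{R}^k\times\{x_\infty\})$, apply Lemma \ref{to_geodesic} to get $N_\infty$ geodesic, and then Lemma \ref{geo_in_product} to identify $N_\infty=\mathbb{R}^k\times Z_\infty$. The paper phrases it as ``for any sequence with $\delta_i\to 0$ the conclusion converges to $0$'' rather than your explicit contradiction with a fixed $\epsilon_0$, but the two formulations are logically the same compactness argument.
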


\begin{proof}
	Let $(Y_i,y_i)$ be any sequence of spaces in $\mathcal{M}(n,0)$ satisfying\\
	(1) $Y_i$ has a closed and $\delta_i$-geodesic subset $N_i$ containing $y_i$,\\
	(2) $N_i$ has a closed subset $S_i$ with
	$$d_{GH}((Y_i,y_i,S_i),(\mathbb{R}^k\times X_i,(0,x_i),\mathbb{R}^k\times\{x_i\}))\le\delta_i\to 0.$$
	Passing to a subsequence, the sequences $(Y_i,y_i,S_i)$ and $(\mathbb{R}^k\times X_i,(0,x_i),\mathbb{R}^k\times\{x_i\})$ converge to the same limit space $$(Y_\infty,y_\infty,S_\infty)=(\mathbb{R}^k\times X_\infty,(0,x_\infty),\mathbb{R}^k\times\{x_\infty\}).$$ 
	Due to Remark \ref{rem_subset_precpt}, we can also assume that
	$$(Y_i,y_i,N_i)\overset{GH}\longrightarrow (Y_\infty,y_\infty,N_\infty).$$
	By Lemma \ref{to_geodesic}, $N_\infty$ is geodesic in $Y_\infty=\mathbb{R}^k\times X_\infty$; moreover, $N_\infty$ contains a slice $\mathbb{R}^k\times \{x_\infty\}$. It follows from \ref{geo_in_product} that $N_\infty$, as a subset of $\mathbb{R}^k\times X_\infty$, is a product $\mathbb{R}^k\times Z_\infty$, where $Z_\infty=N_\infty\cap (\{0\}\times X_\infty)$. Finally, note that $Z_\infty$ is geodesic in $X_\infty$ and
	$$d_{GH}((Y_i,y_i,N_i),(\mathbb{R}^k\times X_\infty,(0,x_\infty),\mathbb{R}^k\times Z_\infty))\to 0.$$
\end{proof}

\section{Almost splitting of limit orbits under all scales}\label{sec_split}

We study the quantitative splitting of the orbit $Gy$ for any $(Y,y,G)\in\Omega(\widetilde{M},\Gamma)$. Again, we assume that $E(M,p)\le\epsilon$ unless otherwise noted.

\begin{lem}\label{non_cpt_orbit}
	Let $(M,p)$ be an open $n$-manifold with $\mathrm{Ric}\ge0$, an infinite fundamental group, and $E(M,p)<1/2$, then the orbit $Gy$ is non-compact for any $(Y,y,G)\in\Omega(\widetilde{M},\Gamma)$.
\end{lem}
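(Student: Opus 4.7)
The plan is to argue by contradiction: assume $Gy$ is compact with $\mathrm{diam}(Gy) \le D_0$, realized via a convergent sequence $(r_i^{-1}\widetilde{M}, \tilde{p}, \Gamma) \overset{GH}\longrightarrow (Y, y, G)$, and produce $\beta_i \in \Gamma$ whose rescaled distances $\rho(e, \beta_i)/r_i$ land in a bounded window lying strictly above $D_0$. Along a subsequence, such $\beta_i$ would converge in the equivariant Gromov-Hausdorff sense to some $g \in G$ with $d(gy, y) > D_0$, contradicting the diameter bound.

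First I would record what compactness of $Gy$ buys us: for any $\epsilon > 0$ and any $C > 0$, for all sufficiently large $i$, no $\gamma \in \Gamma$ satisfies $\rho(e, \gamma)/r_i \in (D_0 + \epsilon, C]$, since any such sequence would subconverge (within the bounded set $B_C(\tilde p) \subset r_i^{-1}\widetilde M$) to a limit orbit point at distance in $[D_0 + \epsilon, C]$ from $y$. So the goal is to exhibit $\beta_i$ inside this forbidden window for some fixed $\epsilon$ and $C$.

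The construction is an iterative midpoint argument in the spirit of Lemma~\ref{pre_limit_delta_geodesic}. Since $\Gamma$ is infinite, pick $\beta_i^{(0)} \in \Gamma$ with $L_0 := \rho(e, \beta_i^{(0)})$ satisfying $L_0 / r_i \to \infty$. Given $\beta_i^{(k)}$ of displacement $L_k$, lift the minimizing loop $c_{\beta_i^{(k)}}$ to a unit-speed geodesic $\tilde{c}^{(k)} : [0, L_k] \to \widetilde{M}$ from $\tilde{p}$ to $\beta_i^{(k)} \tilde p$, and choose $\beta_i^{(k+1)} \in \Gamma$ with $d(\beta_i^{(k+1)} \tilde{p},\, \tilde{c}^{(k)}(L_k/2)) \le D(L_k)$. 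Fix $\eta \in (0, 1/2 - E)$; once $L_k$ exceeds a threshold $L^*$ depending only on $E$ and $\eta$, the escape bound $D(L_k)/L_k \le E + \eta$ and the triangle inequality yield
$$\frac{L_{k+1}}{L_k} \in \bigl[\,1/2 - E - \eta,\ 1/2 + E + \eta\,\bigr] \subset (0, 1).$$
Let $k_i$ be the first index with $L_{k_i}/r_i \le D_0 + \epsilon$; for $i$ large, $L_{k_i - 1} \ge (D_0 + \epsilon)\, r_i / (1/2 + E + \eta) \to \infty$, so the contraction estimate applies at step $k_i - 1$ and forces
$$L_{k_i - 1}/r_i \in \bigl(\,D_0 + \epsilon,\ (D_0 + \epsilon)/(1/2 - E - \eta)\,\bigr].$$
Setting $C_0 := (D_0 + \epsilon)/(1/2 - E - \eta) + 1$, the element $\beta_i := \beta_i^{(k_i - 1)}$ then lies in the forbidden window $(D_0 + \epsilon, C_0)$, contradicting the gap established in the previous paragraph.

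The main subtlety is the dual use of $E < 1/2$ in keeping the iteration factor strictly inside $(0, 1)$: the upper bound $1/2 + E + \eta < 1$ drives the rescaled distances monotonically down to the vicinity of $D_0$ from above, while the lower bound $1/2 - E - \eta > 0$ prevents $L_k$ from collapsing past the forbidden window in a single step, guaranteeing that some $L_{k_i - 1}$ is caught inside it. The threshold $L^*$ is absolute, and because the terminal value $L_{k_i} \asymp (D_0 + \epsilon)\, r_i$ itself tends to infinity with $i$, the escape estimate remains valid at every step actually used in the argument.
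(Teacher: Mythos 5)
Your proposal is correct, but it takes a genuinely different route from the paper's. The paper's proof first imports finite generation of $\Gamma$ from \cite[Lemma 2.1]{Pan_escape} (which itself uses $E(M,p)\le 1/2$), fixes a finite generating set $S$ with $R=\max_{\gamma\in S}\rho(e,\gamma)$, and then, given a very distant element $\gamma_i$, walks along the chain of prefixes of a word for $\gamma_i$ in $S$; since each step has size at most the fixed constant $R$, after rescaling by $r_i^{-1}$ the step size tends to $0$, so some prefix $h_{i,j(i)}$ is pinned at rescaled displacement $\approx 2D$, yielding in the limit an orbit point at distance $2D$ from $y$ and contradicting $Gy\subseteq B_D(y)$. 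You instead bypass finite generation entirely: starting from an element with $L_0/r_i\to\infty$ and repeatedly replacing it by an element within $D(L_k)$ of the midpoint of the lifted minimal representing loop, the bound $E<1/2$ forces the ratio $L_{k+1}/L_k$ into an interval strictly inside $(0,1)$, so the rescaled displacements march geometrically down and one is necessarily caught in a window $(D_0+\epsilon, C_0)$ bounded strictly above $D_0$, contradicting compactness of $Gy$ by the same limiting argument. In effect you re-derive, inside the proof, the bisection mechanism underlying the finite-generation lemma the paper cites; the paper's route is shorter once that lemma is granted, while yours is self-contained and makes the precise role of the strict inequality $E<1/2$ visible (both $1/2-E-\eta>0$ and $1/2+E+\eta<1$ are needed). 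One cosmetic remark: the cleanest way to see that the contraction estimate applies at step $k_i-1$ is to note directly that $L_{k_i-1}>(D_0+\epsilon)r_i\to\infty$ from the definition of $k_i$ as the first index below threshold; the bound $L_{k_i-1}\ge (D_0+\epsilon)r_i/(1/2+E+\eta)$ you wrote is a weakening of this and, if read as following from the contraction estimate, would be circular, so it is worth stating the direct bound first.
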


\begin{proof}
	Suppose that $(Y,y,G)\in\Omega(\widetilde{M},\Gamma)$ has a compact orbit $Gy$. Let $r_i\to\infty$ such that
	$$(r_i^{-1}\widetilde{M},\tilde{p},\Gamma)\overset{GH}\longrightarrow (Y,y,G).$$
	Let $D>0$ such that $Gy\subseteq B_D(y)$. 
	
	Since $\Gamma$ is infinite, we can find a sequence $\gamma_i\in \Gamma$ such that $d(\gamma_i\tilde{p},\tilde{p})\ge 10r_iD$. 
	Because $E(M,p)<1/2$, by \cite[Lemma 2.1]{Pan_escape}, $\Gamma$ is finitely generated. Let $S$ be a finite generating set and let $R=\max_{\gamma\in S} d(\gamma\tilde{p},\tilde{p})$. For each $\gamma_i$, we write 
	$\gamma_i=\prod_{k=1}^{N_i} g_{i,k}$, where $g_{i,k}\in S$. Let $h_{i,j}=\prod_{k=1}^j g_{i,k}$ for each $j$. Then $\{h_{i,j}\tilde{p}\}_{j=1}^{N_i}$ is a series of orbit points starting from $\tilde{p}$ and ending at $\gamma_i\tilde{p}$ such that
	$$d(h_{i,j}\tilde{p},h_{i,j+1}\tilde{p})\le R$$
	for each $j$. In particular, for each $i$ we can find some $h_{i,j(i)}\tilde{p}$ such that
	$$2r_iD-R\le d(h_{i,j(i)}\tilde{p},\tilde{p})\le 2r_iD+R.$$
	Then 
	$$(r_i^{-1}\widetilde{M},\tilde{p},\Gamma,h_{i,j(i)})\overset{GH}\longrightarrow (Y,y,G,h)$$
	with $h\in G$ and $d(hy,y)=2D$. This contradicts the hypothesis that $Gy$ is contained in $B_D(y)$.
\end{proof}

With Lemma \ref{non_cpt_orbit}, we first show that the asymptotic cone $Y$ of $\widetilde{M}$ almost splits off a line when $E(M,p)\le\epsilon$.

\begin{lem}\label{noncpt_space_split}
    Let $(Y,y,G)\in\Omega(\widetilde{M},\Gamma)$. Then there is a length space $X$ such that
	$$d_{GH}((Y,y),(\mathbb{R}\times X, (0,x)))\le \Phi(\epsilon|n).$$
\end{lem}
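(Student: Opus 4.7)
The plan is to locate two points $y_\pm \in Y$ that are approximately equidistant from $y$ at some large scale $L$ with small excess $e_{y_-,y_+}(y)$, and then invoke Cheeger--Colding's almost splitting theorem (Theorem \ref{thm_CC_split}) to produce the approximate $\mathbb{R}$-factor. Since $(Y,y) \in \mathcal{M}(n,0)$, such points will be built from a single orbit element $g \in G$ with $d(y, gy)$ large, symmetrized via a group translation.

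First I would locate a useful orbit element at a prescribed distance. By Lemma \ref{non_cpt_orbit} the orbit $Gy$ is non-compact, and by Corollary \ref{cor_cnt_orbit} it is connected once $\epsilon$ is small. Hence the continuous function $z \mapsto d(y, z)$ maps the connected unbounded set $Gy$ onto $[0, \infty)$, and for any target $L > 0$ one can pick $g \in G$ with $d(y, gy) = 2L$. Next, I would apply the $\delta_\epsilon$-geodesic property (Proposition \ref{limit_delta_geodesic}) to the pair $(y, gy)$ to produce a chain $z_0 = y, z_1, \ldots, z_k = gy$ in $Gy$ with total length at most $2L(1+\delta_\epsilon)$ and each step at most $2L\delta_\epsilon$. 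Set $s_j = \sum_{i \le j} d(z_{i-1}, z_i)$ and take the first $j_0$ with $s_{j_0} \ge L$; a short triangle-inequality calculation (using $s_k \le 2L(1+\delta_\epsilon)$ and $d(y, z_{j_0}) + d(z_{j_0}, gy) \ge 2L$) gives
$$d(y, z_{j_0}),\ d(z_{j_0}, gy) \in [L(1-2\delta_\epsilon),\ L(1+2\delta_\epsilon)].$$
Writing $z_{j_0} = \gamma y$ for some $\gamma \in G$ and setting $y_- := \gamma^{-1} y$, $y_+ := \gamma^{-1} g y$, the isometric action of $G$ yields $d(y, y_-) = d(\gamma y, y)$ and $d(y, y_+) = d(\gamma y, gy)$, both within $2\delta_\epsilon L$ of $L$, while $d(y_-, y_+) = d(y, gy) = 2L$. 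The excess bound
$$e_{y_-, y_+}(y) = d(y, y_-) + d(y, y_+) - d(y_-, y_+) \le 4\delta_\epsilon L$$
then follows immediately.

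With $(Y, y) \in \mathcal{M}(n, 0)$, Theorem \ref{thm_CC_split} produces a length space $X$ with
$$d_{GH}\bigl(B_R(y), B_R((0,x))\bigr) \le \Phi(\delta_\epsilon L,\ L^{-1} \mid n, R)$$
whenever $L \gg R$. To turn this into the claimed pointed Gromov--Hausdorff bound $\Phi(\epsilon\mid n)$, I would balance $L$ against the scale, for instance taking $L = \delta_\epsilon^{-1/2}$ so that $\delta_\epsilon L = L^{-1} = \delta_\epsilon^{1/2} \to 0$ as $\epsilon \to 0$, and letting $R$ grow like $\delta_\epsilon^{-1/4}$; this yields closeness of $B_R(y)$ to $B_R((0,x))$ for $R$ of order $1/\Phi(\epsilon\mid n)$, matching the definition of pointed GH distance.

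The central technical obstacle is that the $\delta_\epsilon$-geodesic property is \emph{multiplicative}, so the excess $4\delta_\epsilon L$ scales with $L$. One cannot simply push $L \to \infty$ at fixed $\epsilon$ to sharpen the splitting, as the absolute error in the excess would blow up along with it. The argument works only because $\delta_\epsilon \to 0$ with $\epsilon \to 0$, which opens just enough room to balance $L$ against the excess as above. A secondary point is that Theorem \ref{thm_CC_split} is stated under exact equidistance $d(y_\pm, y) = L$, whereas our points only satisfy this up to a factor $1 \pm 2\delta_\epsilon$; this is easily absorbed by truncating $y_\pm$ slightly along geodesics from $y$ and is harmless for the final $\Phi$-type estimate.
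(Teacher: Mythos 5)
Your proof is correct and follows essentially the same approach as the paper: pick a distant orbit point, use the $\delta_\epsilon$-geodesic chain from Proposition \ref{limit_delta_geodesic} to find an orbit point near the midpoint, translate by the corresponding group element to put $y$ in the middle with small excess, and then apply Theorem \ref{thm_CC_split} with a balance of $L$ and $R$ against $\delta_\epsilon$. The only differences (a factor-of-two normalization in $L$, an explicit appeal to Corollary \ref{cor_cnt_orbit} for exact equidistance which is not actually needed, and a slightly different parameter balance) are cosmetic.
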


\begin{proof}
	For any small $\eta>0$, we will determine an $\epsilon>0$ so that $$d_{GH}((Y,y),(\mathbb{R}\times X, (0,x)))\le \eta$$
	when $E(M,p)\le \epsilon$. 
	
	Since $Gy$ is non-compact from Lemma \ref{non_cpt_orbit}, we can choose a point $gy$ so that $d(y,gy)=L\gg R$, where $L$ and $R$ will be determined later. We know that $Gy$ is $\delta_\epsilon$-geodesic by Proposition \ref{limit_delta_geodesic}. This provides a series of points $z_0=y,...,z_j,...,z_k=gy$ in $Gy$ such that
	$$\sum_{j=0}^{k} d(z_{j-1},z_{j})\le (1+\delta_\epsilon)L,\quad d(z_{j-1},z_{j})\le \delta_\epsilon L \text{ for all }j.$$
	We choose a point $w\in \{z_j\}_j$ that is about the middle between $y$ and $gy$; more precisely, $w$ such that
	$$(1/2-\delta_\epsilon)L\le d(y,w)\le (1/2+\delta_\epsilon)L.$$
	We write this $w=hy$ for some $h\in G$. Then for $y_-=h^{-1}y$ and $y_+=h^{-1}gy$, we have
	$$e_{y_-,y_+}(y)\le 2\delta_\epsilon L.$$
	By quantitative splitting Theorem \ref{thm_CC_split}, we see that
	$$d_{GH}(B_R(y),B_R(0,x))\le \Phi(\delta_\epsilon L,L^{-1}|n,R),$$
	where $(0,x)\in \mathbb{R}\times X$ for some length space $X$.
	
	Now we set $R=2\eta^{-1}$, $L=1/\sqrt{\delta_\epsilon}$, and $\epsilon>0$ sufficiently small so that
	$$\Phi(\delta_\epsilon L,L^{-1}|n,R)=\Phi(\epsilon|n,2\eta^{-1})\le\eta.$$
	With this $\epsilon$, we have 
	$$d_{GH}((Y,y),(\mathbb{R}\times X,(0,x)))\le\eta.$$
\end{proof}

\begin{lem}\label{orbit_close_to_segment}
	In the context of Theorem \ref{thm_CC_split}, let $S=\{z_j\}_{j=0}^k$ be a series of points in $Y$ such that\\
	(1) $z_0=y_-$, $z_k=y_+$, and $y\in S$;\\
	(2) $\sum_{j=1}^{k} d(z_{j-1},z_{j})\le (1+\delta)L$;\\
	(3) $d(z_{j-1},z_{j})\le \delta L$ for all $j=1,...,k$.\\
	Then 
	$$d_{GH}((B_R(y),y,S\cap B_R(y)),(B_R (0,x),(0,x),[-R,R]\times \{x\})\le \Phi(L^{-1},\delta L|n,R).$$
\end{lem}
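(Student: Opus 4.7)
The plan is to use the Cheeger--Colding almost-splitting map $F:B_R(y)\to B_R((0,x))\subset \mathbb{R}\times X$, $F(z)=(h(z)-h(y),w)$, with $w$ the foot of $z$ on $h^{-1}(h(y))$, supplied by Theorem \ref{thm_CC_split} and Proposition \ref{split_estimates}, and to show that $F$ sends $S\cap B_R(y)$ to within $\Phi(L^{-1},\delta L|n,R)$ (Hausdorff) of $[-R,R]\times\{x\}$. By the remark following Definition \ref{def_GH_subset}, one only needs a single $\Phi$-approximation satisfying all four conditions; conditions (1)--(3) are given by Theorem \ref{thm_CC_split} itself, so the genuine content is to establish condition (4) for the pair of subsets $S\cap B_R(y)$ and $[-R,R]\times\{x\}$.

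The first step is to show that every chain point in $B_R(y)$ maps close to the axis $\mathbb{R}\times\{x\}$. For $z_j\in S\cap B_R(y)$, the chain-length hypothesis (2)--(3) combined with the excess bound $e_{y_-,y_+}(y)\le \delta$ and the triangle inequality force each partial chain $z_0,\ldots,z_j$ and $z_j,\ldots,z_k$ to be almost minimizing, yielding
$$\bigl| d(z_j,y) - | d(z_j,y_+) - d(y,y_+) |\bigr| \le \Phi(\delta L|n,R).$$
Since $b(z_j)=d(z_j,y_+)-d(y,y_+)$ is the signed axial coordinate, this says $d(z_j,y)=|b(z_j)|\pm\Phi$. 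Combined with the $C^0$-estimate in Proposition \ref{split_estimates}(1), we obtain $|h(z_j)-h(y)|=d(z_j,y)\pm\Phi$. The almost-parallel estimate in Proposition \ref{split_estimates}(2) then gives $d(z_j,w_j)=d(z_j,y)\pm\Phi$, and the almost-Pythagorean estimate in Proposition \ref{split_estimates}(3) yields $d(y,w_j)^2\le \Phi$. Since $F(y)$ is $\Phi$-close to $(0,x)$, the fiber component of $F(z_j)$ is $\Phi$-close to $x$, as desired.

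The second step is density along the axis: every point of $[-R,R]\times\{x\}$ lies $\Phi$-close to some $F(z_j)$. Since $b$ is $1$-Lipschitz and $|h-b|\le\Phi$, consecutive chain points satisfy $|h(z_{j-1})-h(z_j)|\le \delta L + \Phi$. Moreover, along the almost-minimizing chain from $y_-$ to $y_+$, the function $b$ is affine up to $\Phi$, hence monotone up to $\Phi$; at the chain points where the chain first enters and last exits $B_R(y)$, $d(z_j,y)$ is close to $R$ and by the first step $|h(z_j)-h(y)|$ is close to $R$ as well. Consequently the set $\{h(z_j)-h(y):z_j\in S\cap B_R(y)\}$ is $(\delta L+\Phi)$-dense in $[-R,R]$, and together with the first step this establishes condition (4) with total error $\Phi(L^{-1},\delta L|n,R)$.

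The main technical obstacle is the estimate $\bigl|d(z_j,y) - |d(z_j,y_+)-d(y,y_+)|\bigr|\le \Phi(\delta L|n,R)$ in the first step: this is the only place where the chain-length bound and the hypothesis $y\in S$ are genuinely used, and it is what upgrades the abstract almost-splitting of Theorem \ref{thm_CC_split} to the quantitative statement that the given chain itself traces out the axis. Everything else is a mechanical consequence of Proposition \ref{split_estimates}; one secondary subtlety is simply checking that the chain enters and exits $B_R(y)$ essentially once (which follows from the almost-monotonicity of $b$ along the chain), so that the segment of chain points inside $B_R(y)$ is densely sampled.
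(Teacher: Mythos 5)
Your proposal is correct and follows essentially the same route as the paper: use the Cheeger--Colding splitting map $F(z)=(h(z)-h(y),w)$, show via the near-minimality of the chain from $y_-$ through $y$ to $y_+$ that $d(z_j,w_j)=d(z_j,y)\pm\Phi$ and hence $d(y,w_j)\le\Phi$ by the almost-Pythagorean estimate, and then get density in $[-R,R]\times\{x\}$ from condition (3). The key estimate you isolate, $\bigl|d(z_j,y)-|d(z_j,y_+)-d(y,y_+)|\bigr|\le\Phi(\delta L|n,R)$, is exactly the content of the paper's explicit telescoping computation of $b(z_j)-b(y)=\sum d(z_i,z_{i+1})-\sum d(z_i,z_{i+1})\pm\Phi$; the only minor superfluity is your appeal to the excess bound $e_{y_-,y_+}(y)\le\delta$, which is not actually needed here since it is subsumed by $y\in S$ together with the chain-length bound (2).
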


\begin{proof}
	It suffices to prove the statement for manifolds.
	
	Let $$F:B_R(y)\to \mathbb{R}\times h^{-1}(h(y)),\quad z\mapsto (h(z)-h(y),w)$$ be the Gromov-Hausdorff approximation mentioned in Section \ref{sec_pre}. We first show that
	$$d(w_j,y)\le \Phi(\delta,L^{-1}|n,R)$$
	for any point $z_j\in B_R(y)\cap S$, where $w_j\in h^{-1}(h(y))$ is a closed point from $z_j$ to $h^{-1}(h(y))$. Since $y\in S$, we write $y=z_J$ for some $J=1,...,k$. We consider the case that $j<J$; the case $j>J$ would be similar. By Proposition \ref{split_estimates}(1,2) and condition (2), we have
	\begin{align*}
		d(z_j,w_j) &= h(z_j)-h(y)\pm \Phi(\delta,L^{-1}|n,R)\\
		&= b(z_j)-b(y)\pm \Phi(\delta,L^{-1}|n,R)\\
		&= d(y_+,z_j)-d(y_+,y)\pm \Phi(\delta,L^{-1}|n,R)\\
		&= \sum_{i=j}^{k-1}d(z_i,z_{i+1})-\sum_{i=J}^{k-1}d(z_i,z_{i+1})\pm \Phi(\delta,L^{-1}|n,R)\\
		&= \sum_{i=i}^{J-1} d(z_i,z_{i+1})\pm \Phi(\delta,L^{-1}|n,R)\\
		&= d(z_j,y)\pm \Phi(\delta,L^{-1}|n,R).
	\end{align*}
    Together with Proposition \ref{split_estimates}(3), we see that
    $$d(w_j,y)\le \Phi(\delta,L^{-1}|n,R).$$
    
    Let $(t,x)\in [-R,R]\times \{x\}$. By condition (3), we can choose $z_j\in B_R(y)\cap S$ such that 
    \begin{align*}
    h(z_j)-h(y)&= \pm d(z_j,y)\pm \Phi(\delta,L^{-1}|n,R)\\
    &\in[t-\delta L-\Phi(\delta,L^{-1}|n,R),t+\delta L+\Phi(\delta,L^{-1}|n,R)].
    \end{align*}
    Then this $z_j$ satisfies that
    \begin{align*}
    d(F(z_j),(t,x))^2&=|h(z_j)-h(y)-t|^2+d(w_j,y)^2\\
    &\le \Phi(L^{-1},\delta L|n,R)^2+\Phi(\delta,L^{-1}|n,R)^2.
    \end{align*}
    This shows that
    $$[-R,R]\times \{x\}\subseteq B_{\Phi(L^{-1},\delta L|n,R)}\left(F(B_R(y)\cap S)\right)$$
    and the result follows.
\end{proof}


With Lemmas \ref{subset_almost_product} and \ref{orbit_close_to_segment}, the set $\{z_j\}\subseteq Gy$ that we used in Lemma \ref{noncpt_space_split} shows the almost splitting of $Gy$.

\begin{prop}\label{noncpt_orbit_split}
	Let $(Y,y,G)\in\Omega(\widetilde{M},\Gamma)$. Then there are a length space $X$ and a closed geodesic subset $Z\subseteq X$ such that
	$$d_{GH}((Y,y,Gy),(\mathbb{R}\times X, (0,x),\mathbb{R}\times Z))\le \Phi(\epsilon|n).$$
\end{prop}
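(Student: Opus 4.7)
The plan is to combine three ingredients already developed in Sections~\ref{sec_geo}--\ref{sec_split}: the orbit $Gy$ is $\delta_\epsilon$-geodesic (Proposition~\ref{limit_delta_geodesic}), the ambient space almost splits as $\mathbb{R}\times X$ (Lemma~\ref{noncpt_space_split}), and the chain witnessing that almost splitting can be taken to lie inside $Gy$ and to be close to the Euclidean factor (Lemma~\ref{orbit_close_to_segment}). The strategy is to exhibit a subset $S\subseteq Gy$ that looks like $\mathbb{R}\times\{x\}$ in the factorization of $Y$, and then invoke Lemma~\ref{subset_almost_product} to propagate this from $S$ to the entire orbit $Gy$.

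Concretely, I would reopen the proof of Lemma~\ref{noncpt_space_split}: pick $g\in G$ with $d(y,gy)=L\gg R$, and use Proposition~\ref{limit_delta_geodesic} to produce an orbit chain $z_0=y,\dots,z_k=gy$ in $Gy$ with total length $\le(1+\delta_\epsilon)L$ and mesh $\le\delta_\epsilon L$. Choosing $w=z_{j_0}$ near the midpoint and writing $w=hy$, translation by $h^{-1}$ moves the chain into one joining $y_-=h^{-1}y$ to $y_+=h^{-1}gy$ through $y$, with $e_{y_-,y_+}(y)\le 2\delta_\epsilon L$. The translated chain $S:=h^{-1}\{z_j\}$ still lies in $Gy$ by $G$-invariance, and it satisfies the hypotheses of Lemma~\ref{orbit_close_to_segment} verbatim, yielding
$$d_{GH}\bigl((B_R(y),y,S\cap B_R(y)),(B_R(0,x),(0,x),[-R,R]\times\{x\})\bigr)\le \Phi(L^{-1},\delta_\epsilon L\,|\,n,R).$$
Setting $L=1/\sqrt{\delta_\epsilon}$ and letting $R=R(\epsilon)\to\infty$ slowly as $\epsilon\to 0$ makes the right-hand side a single $\Phi(\epsilon|n)$ via a diagonal choice, and upgrades the local statement to a pointed subset estimate in the sense of Definition~\ref{def_GH_subset}.

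At this stage the subset $S\subseteq Gy$ satisfies
$$d_{GH}\bigl((Y,y,S),(\mathbb{R}\times X,(0,x),\mathbb{R}\times\{x\})\bigr)\le\Phi(\epsilon|n),$$
and $Gy$ itself is $\delta_\epsilon$-geodesic by Proposition~\ref{limit_delta_geodesic}. These are exactly the hypotheses of Lemma~\ref{subset_almost_product} with $k=1$, which therefore produces a (possibly new) length space $(X',x')$ and a closed geodesic subset $Z'\subseteq X'$ with
$$d_{GH}\bigl((Y,y,Gy),(\mathbb{R}\times X',(0,x'),\mathbb{R}\times Z')\bigr)\le\Phi(\epsilon|n),$$
which is the desired statement.

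The hard part is bookkeeping rather than geometry: one has to calibrate the three scales $L$ (length of the chain), $R$ (radius on which the approximation is verified), and $\epsilon$ (the escape-rate input) so that the local splitting error $\Phi(L^{-1},\delta_\epsilon L\,|\,n,R)$, the ambient splitting error from Lemma~\ref{noncpt_space_split}, and the scale $1/\delta$ implicit in Definition~\ref{def_GH_subset} all simultaneously collapse into one $\Phi(\epsilon|n)$. No new geometric input is required; in particular the possibility $X\ne X'$ is already built into Lemma~\ref{subset_almost_product}, so one does not need to match the two cross-section factors.
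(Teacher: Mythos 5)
Your proposal matches the paper's proof essentially step for step: both re-use the chain and the points $y_-,y_+$ from Lemma~\ref{noncpt_space_split}, translate the chain by $h^{-1}$ to produce $S\subseteq Gy$ through $y$, feed $S$ into Lemma~\ref{orbit_close_to_segment} to get the pointed-subset closeness of $(Y,y,S)$ to $(\mathbb{R}\times X,(0,x),\mathbb{R}\times\{x\})$, and then apply Lemma~\ref{subset_almost_product} with $k=1$. The scale calibration you describe ($L=1/\sqrt{\delta_\epsilon}$, $R\to\infty$ slowly) is exactly the paper's choice $L=1/\sqrt{\delta_\epsilon}$, $R=2\eta^{-1}$ followed by shrinking $\epsilon$, so the arguments coincide.
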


\begin{proof}
	Let $\eta>0$. We continue to use the notations in the proof of Lemma \ref{noncpt_space_split}, from which we see that the points $y_-$ and $y_+$ with $$e_{y_-,y_+}(y)\le2\delta_\epsilon L=2\sqrt{\delta_\epsilon}$$ provides a $\Phi(\epsilon|n,2\eta^{-1})$-approximation between $(Y,y)$ and $(\mathbb{R}\times X,(0,x))$. Moreover, we also have a series of points $S=\{z'_j=h^{-1}z_j\}_{j=0}^{k}\subseteq Gy$ with\\
	(1) $z'_0=y_-$, $z'_k=y_+$, and $z'_j=y$ for some $j$;\\
	(2) $\sum_{j=1}^{k-1} d(z'_{j-1},z'_{j})\le (1+\delta_\epsilon)L$;\\
	(3) $d(z'_{j-1},z'_{j})\le \delta_\epsilon L$ for all $j=1,...,k$.\\
	It follows from Lemma \ref{orbit_close_to_segment} that
	$$d_{GH}((B_R(y),y,S\cap B_R(y)),(B_R (0,x),(0,x),[-R,R]\times \{x\})\le \Phi(\epsilon|n,R).$$
	Recall that $R$ is chosen as $2\eta^{-1}$, then for $\epsilon$ small, we have
	$$d_{GH}((Y,y,S),(\mathbb{R}\times X,(0,x),\mathbb{R}\times\{x\}))\le \Phi(\epsilon|n,2\eta^{-1})\le\eta.$$
	Then the result follows from Lemma \ref{subset_almost_product}.
\end{proof}	
	
When $E(M,p)=0$, \cite[Lemma 3.2]{Pan_escape} shows that the orbit $Gy$ is a metric product $\mathbb{R}^k\times Z$, where $Z$ is compact. With Proposition \ref{noncpt_orbit_split}, where we showed that $Gy$ is $\Phi(\epsilon|n)$-close to a metric product $\mathbb{R}\times Z$, we wish to continue the splitting process if $Z$ is non-compact.

As mentioned in the introduction, the main issue here is that (pointed) Gromov-Hausdorff closeness in general cannot measure whether a subset is compact or not. In the context of 
$$d_{GH}((Y,y,Gy),(\mathbb{R}\times X, (0,x),\mathbb{R}\times Z))\le \Phi(\epsilon|n),$$
if $\mathrm{diam}(Z)\gg \Phi(\epsilon|n)^{-1}$, then whether $Z$ is compact or not does not a make difference. 
To overcome this, for each $(Y,y,G)\in\Omega(\widetilde{M},\Gamma)$, we shall consider a corresponding family of spaces $\{(sY,y,G)\}_{s>0}\subseteq \Omega(\widetilde{M},\Gamma)$. 

Our main goal for the rest of this section is the following result.
\begin{prop}\label{split_all_scales}
	Let $(Y,y,G)\in\Omega(\widetilde{M},\Gamma)$. Then there is an integer $k$ such that for all $s>0$,
	$$d_{GH}((sY,y,Gy),(\mathbb{R}^k\times X_s,(0,x_s),\mathbb{R}^k\times Z_s))\le \Phi(\epsilon|n),$$
	where $(X_s,x_s)$ is a length space depending on $sY$, and $Z_s$ is a closed geodesic subset of $X_s$. Moreover, one of the following holds:\\
	(1) $Z_s$ is a single point for all $s>0$;\\
	(2) $\mathrm{diam}(Z_s)\in[0.9,1.1]$ for some $s>0$.
\end{prop}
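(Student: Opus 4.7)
The plan is to find the largest integer $k\le n$ such that $\mathbb{R}^k$ splits off the orbit $Gy$ uniformly at every scale, and then pin down the residual $Z_s$ by a critical rescaling argument on its diameter.

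First I note that rescaling preserves membership in $\Omega(\widetilde{M},\Gamma)$: if $(Y,y,G)$ arises from $(r_i^{-1}\widetilde{M},\tilde{p},\Gamma)$ with $r_i\to\infty$, then $(sY,y,G)$ arises from $((r_i/s)^{-1}\widetilde{M},\tilde{p},\Gamma)$, with $r_i/s\to\infty$. Hence Proposition \ref{noncpt_orbit_split} applies to each $(sY,y,G)$, giving an $\mathbb{R}$-splitting with error $\Phi(\epsilon|n)$ at every scale. I then define $k$ to be the largest integer in $\{1,\dots,n\}$ such that for every $s>0$, the orbit $Gy$ in $(sY,y)$ admits an $\mathbb{R}^k$-splitting with error $\Phi(\epsilon|n)$ and geodesic residual $Z_s\subseteq X_s$; this $k$ exists and is at least $1$.

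For this $k$, set $D(s):=\inf\mathrm{diam}(Z_s)$ over all valid decompositions at scale $s$. Any valid decomposition at scale $s$ with residual of diameter $D$ rescales to a valid decomposition at scale $\lambda s$ with residual diameter $\lambda D$, giving $D(\lambda s)\le\lambda D(s)$; the reverse inequality is symmetric, so $D(\lambda s)=\lambda D(s)$. The maximality of $k$ prevents $Z_s$ from being arbitrarily large: if $\mathrm{diam}(Z_{s_0})$ were arbitrarily large at some scale, then by the $\delta_\epsilon$-geodesic property of $Gy$ (Proposition \ref{limit_delta_geodesic}) one finds two far-apart orbit points with a small-excess midpoint in the $X_{s_0}$-direction, and Cheeger-Colding's Theorem \ref{thm_CC_split}, combined with Lemma \ref{orbit_close_to_segment} and Lemma \ref{subset_almost_product}, then produces an extra $\mathbb{R}$-splitting at scale $s_0$ orthogonal to the existing $\mathbb{R}^k$; scaling invariance propagates this to all scales, contradicting the choice of $k$. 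Hence $D(s)<\infty$ for each $s$, and the dichotomy follows: if $D\equiv 0$, we may take each $Z_s$ as a single point, giving case (1); if $D(s_0)>0$ for some $s_0$, the critical scale $s^{*}=s_0/D(s_0)$ yields $D(s^{*})=1\in[0.9,1.1]$, giving case (2).

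The main technical obstacle is making the linear scaling $D(\lambda s)=\lambda D(s)$ rigorous, because the pointed Gromov-Hausdorff error in Definition \ref{def_GH_subset} couples the tolerance $\delta$ with the ball radius $1/\delta$, so metric rescaling does not translate into a clean rescaling of the approximation parameter. One must apply the splitting machinery freshly at each scale and absorb the resulting discrepancies into the universal $\Phi(\epsilon|n)$ tolerance. A secondary subtlety is ensuring that the extra $\mathbb{R}$-splitting in the maximality argument actually holds at \emph{every} scale, not just at the single scale where the large diameter was observed, which requires carefully exploiting the scaling invariance of the $\delta_\epsilon$-geodesic property together with the compactness of $\Omega(\widetilde{M},\Gamma)$ provided by Proposition \ref{cpt_cnt}.
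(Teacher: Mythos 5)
Your overall plan — define $k$ as the maximal amount of uniform $\mathbb{R}^k$-splitting across all scales, then use rescaling to normalize the residual diameter to order~$1$ — is the right idea and is the same high-level strategy as the paper. But the step you yourself flag as the ``main technical obstacle,'' the exact scaling law $D(\lambda s)=\lambda D(s)$, is where the proof actually lives, and you have not closed it. It is in fact false as stated: ``valid decomposition at scale $s$'' means the approximation error is at most $\Phi(\epsilon|n)$ in the sense of Definition \ref{def_GH_subset}, and that definition couples the tolerance $\delta$ with the ball radius $1/\delta$. Rescaling a $\delta$-approximation by $\lambda$ produces something that is only a $\lambda\delta$-approximation (when $\lambda\ge1$) or a $\delta/\lambda$-approximation (for the condition on the ball of radius $1/\delta$), so a valid decomposition at scale $s$ does not rescale to a valid decomposition at scale $\lambda s$ for arbitrary $\lambda$. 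As a result you cannot conclude $D(\lambda s)=\lambda D(s)$, and the ``reverse inequality is symmetric'' step also fails for the same reason. Your maximality argument has the same problem: you produce an extra $\mathbb{R}$-splitting at the one scale $s_0$ where the residual is large, and then invoke ``scaling invariance'' to spread it to all scales — but obtaining the splitting at \emph{all} scales is precisely what you need, and the argument you give does not deliver it.

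The paper sidesteps this by never claiming an exact scaling law. Its Lemma \ref{small_large_cpt} only rescales by a factor $d^{-1}$ with $d\in[L^{-1},L]=[\delta^{1/2},\delta^{-1/2}]$; with that restriction a $\delta$-approximation degrades to at worst a $\delta^{1/2}$-approximation, which is still small. This gives a weaker trichotomy (residual diameter always $>L$, always $<L^{-1}$, or $\in[0.9,1.1]$ at some scale), not equality $D(\lambda s)=\lambda D(s)$. The ``always large'' case feeds into Lemma \ref{large_diam_split} to increase $k$ at every scale simultaneously, and the process terminates after at most $n$ steps. Two further points worth tightening in your write-up: (i) your definition of $k$ is circular as phrased — you want the largest $k$ that splits off with error ``$\Phi(\epsilon|n)$,'' but $\Phi$ is exactly the function being constructed; the paper handles this by tracking a concrete chain $\Phi_1,\Phi_2,\dots$ with $\Phi_{j+1}=\Phi_0(\Phi_j,\Phi_j^{1/2}\mid n)$. (ii) $D(s)=0$ only means the infimum over decompositions is $0$, not that some decomposition has a single-point residual; this is harmless here because one can take $\mathrm{diam}(Z_s)<\Phi(\epsilon|n)$ and absorb it into the error, but it should be said. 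Bottom line: the skeleton is right, but the load-bearing rescaling step needs the bounded-factor argument of Lemma \ref{small_large_cpt} rather than the global scaling law you posit.
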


In next section, we will further rule out case (2) above.

\begin{lem}\label{large_diam_split}
	Let $(Y,y)\in\mathcal{M}(n,0)$ and let $G$ be a closed subgroup of $\mathrm{Isom}(Y)$. Suppose that\\
	(1) $Gy$ is $\delta$-geodesic in $Y$,\\
	(2) $d_{GH}((Y,y,Gy),(\mathbb{R}^k\times X,(0,x),\mathbb{R}^k\times Z))\le\delta,$ where $X$ is a length space and $Z$ is a closed subset in $X$.\\
	(3) $\mathrm{diam}(Z)\ge L$.\\
	Then 
	$$d_{GH}((Y,y,Gy),(\mathbb{R}^{k+1}\times X',(0,x'),\mathbb{R}^{k+1}\times Z'))\le \Phi(\delta,L^{-1}|n)$$
	for some length metric $(X',x')$ and some closed geodesic subset $Z'\subseteq X'$. 
\end{lem}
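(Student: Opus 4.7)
The plan is to locate two orbit points $y_-, y_+ \in Gy$ such that Cheeger-Colding's quantitative splitting applied to this pair yields a new almost-splitting direction almost orthogonal to the existing $\mathbb{R}^k$-factor from (2), and then to combine the two splittings into a single $\mathbb{R}^{k+1}$-splitting of $(Y,y)$. First, since the basepoint $x$ lies in $Z$ and $\mathrm{diam}(Z) \ge L$, the triangle inequality gives some $z_+ \in Z$ with $d_X(x, z_+) \ge L/2$. Via the approximation in (2) there is an orbit point $y_+^{(0)} \in Gy$ corresponding to $(0, z_+) \in \{0\}\times Z \subset \mathbb{R}^k \times X$, with $d(y, y_+^{(0)}) = d_X(x, z_+) \pm O(\delta)$. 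Following the translation trick of Lemmas \ref{noncpt_space_split} and \ref{noncpt_orbit_split}, I use the $\delta$-geodesic structure of $Gy$ to produce a chain in $Gy$ from $y$ to $y_+^{(0)}$ of total length $\le (1+\delta) d(y, y_+^{(0)})$, pick an approximate midpoint $w = hy$ of this chain ($h \in G$), and set $y_- := h^{-1} y$, $y_+ := h^{-1} y_+^{(0)}$. By the isometric property of $h^{-1}$, $d(y_\pm, y) \approx L/4$ and $e_{y_-, y_+}(y) \le \delta \cdot d(y, y_+^{(0)})$. Choosing an appropriate intermediate scale (for example $L_{\mathrm{CC}} = \min(L/4, \delta^{-1/2})$, so that both $L_{\mathrm{CC}}\to\infty$ and $\delta L_{\mathrm{CC}} \to 0$ as $\delta\to 0$ and $L\to\infty$) and applying Theorem \ref{thm_CC_split} yields an almost-splitting of $(Y,y)$ into $\mathbb{R} \times X''$ with error $\Phi(\delta, L^{-1}|n)$.

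The main obstacle is to ensure that this new $\mathbb{R}$-direction is almost orthogonal to the existing $\mathbb{R}^k$-factor, so that combining the two splittings genuinely gives $\mathbb{R}^{k+1}$ rather than remaining $k$-dimensional. Geometrically, orthogonality holds because the axis between $y$ and $y_+^{(0)}$ lies in the $\{0\} \times X$-slice of the model space; translation by the isometry $h^{-1}$ preserves this orthogonality asymptotically. I would make this rigorous by running the Dirichlet harmonic construction of Proposition \ref{split_estimates} for $k+1$ Busemann-like functions in parallel: the first $k$ approximate the $\mathbb{R}^k$-coordinates from (2), while the $(k+1)$-th is associated to the pair $y_\pm$. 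The $C^0$-estimates of Proposition \ref{split_estimates}(1) together with the ambient product structure from (2) give almost pairwise orthogonality of their gradients, and the higher-dimensional version of Cheeger-Colding's splitting theory then produces the $\mathbb{R}^{k+1}$-almost-splitting of $(Y,y)$.

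Finally, the orbit closeness is derived in parallel with Proposition \ref{noncpt_orbit_split}: Lemma \ref{orbit_close_to_segment} applied to the chain from $y_-$ through $y$ to $y_+$ produces a subset of $Gy$ close to a segment $[-R, R] \times \{x'\}$ in the new $\mathbb{R}$-direction; combined with the subset of $Gy$ approximating the slice $\mathbb{R}^k \times \{x\}$ from (2), this yields a closed subset $S \subset Gy$ close to $\mathbb{R}^{k+1} \times \{x'\}$ inside the almost-split model $\mathbb{R}^{k+1} \times X'$. Since $Gy$ is $\delta$-geodesic by (1), Lemma \ref{subset_almost_product} then supplies the closed geodesic $Z' \subset X'$ together with the desired estimate
$$d_{GH}((Y, y, Gy), (\mathbb{R}^{k+1} \times X', (0, x'), \mathbb{R}^{k+1} \times Z')) \le \Phi(\delta, L^{-1}|n).$$
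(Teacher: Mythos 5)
Your proposal takes a genuinely different route from the paper. The paper proves this lemma by a compactness/contradiction argument: assume a contradicting sequence $(Y_i,y_i,G_i)$ with $\delta_i\to 0$ and $\operatorname{diam}(Z_i)\to\infty$, pass to the limit $(Y_\infty,y_\infty,G_\infty y_\infty)=(\mathbb{R}^k\times X_\infty,(0,x_\infty),\mathbb{R}^k\times Z_\infty)$, observe that $Z_\infty$ is noncompact and geodesic (by Lemma \ref{to_geodesic}), produce a line in $\{0\}\times Z_\infty$ by a translated-midpoint construction, and then invoke the (exact) Cheeger--Colding splitting theorem and Lemma \ref{geo_in_product}. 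Your argument instead tries to prove the quantitative statement directly, at a fixed $\delta$, by extracting a new pair $(y_-,y_+)$ from the $\delta$-geodesic chain and running the quantitative splitting machinery for $k+1$ directions simultaneously.

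There is a real gap in your orthogonality step. You write that ``translation by the isometry $h^{-1}$ preserves this orthogonality asymptotically,'' but this is exactly the point that needs proof and is not automatic. The element $h\in G$ is an isometry of $Y$; at the fixed, positive-$\delta$ level there is no a priori reason for $h^{-1}$ to be close to an isometry of the model $\mathbb{R}^k\times X$ that respects the product, so the translated pair $y_\pm=h^{-1}y,\,h^{-1}y_+^{(0)}$ need not be approximately aligned with the $\{0\}\times X$ slice. Indeed, a chain in $Gy\approx\mathbb{R}^k\times Z$ from $y\approx(0,x)$ to $y_+^{(0)}\approx(0,z_+)$ of length $\approx L/2$ with $(1+\delta)$-efficiency controls the total $\mathbb{R}^k$-drift only up to roughly $\sqrt{\delta}\,L$, and the individual isometry $h$ could shear the pair by a comparable amount; nothing in (1)--(3) prevents this. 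The paper gets this for free in the limit because, once $X_\infty$ contains no line, $\operatorname{Isom}(\mathbb{R}^k\times X_\infty)$ splits as $\operatorname{Isom}(\mathbb{R}^k)\times\operatorname{Isom}(X_\infty)$ and $h_j(0,x_\infty)\in\{0\}\times Z_\infty$ forces $h_j$ to preserve $\{0\}\times X_\infty$; no quantitative analogue of this fact is available in your setting without further work. A second, smaller issue is that Theorem \ref{thm_CC_split} as quoted splits off one $\mathbb{R}$-factor; running $k+1$ Busemann-type functions ``in parallel'' requires the higher-dimensional almost-splitting theorem of Cheeger--Colding together with an almost-Pythagorean/orthogonality hypothesis among all $k+1$ pairs, which is precisely what the gap above fails to furnish. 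The final step (Lemma \ref{orbit_close_to_segment} plus Lemma \ref{subset_almost_product}) is fine once the $\mathbb{R}^{k+1}$ almost-splitting is in hand, but the splitting itself is not established. To make your route work you would have to prove a quantitative version of the isometry-group-splitting fact; the compactness argument in the paper is designed specifically to sidestep this.
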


\begin{proof}
	Suppose that there is a contradicting sequence $\{(Y_i,y_i,G_i)\}_i$ with\\
	(1) $G_iy_i$ is $\delta_i$-geodesic in $Y_i$, where $\delta_i\to 0$;\\
	(2) $d_{GH}((Y_i,y_i,G_i y_i),(\mathbb{R}^k\times X_i,(0,x_i),\mathbb{R}^k\times Z_i))=\delta_i\to 0$,\\
	(3) $\mathrm{diam}(Z_i)\to\infty$.\\ 
	Passing to a subsequence, 
	$$(Y_i,y_i,G_i)\overset{GH}\longrightarrow (Y_\infty,y_\infty,G_\infty)$$
	such that
	$$(Y_\infty,y_\infty,G_\infty y_\infty)=(\mathbb{R}^k\times X_\infty,(0,x_\infty),\mathbb{R}^k\times Z_\infty),$$
	where $X_\infty$ is a length space. By Lemma \ref{to_geodesic} and the hypothesis (1,3), $Z_\infty$ is a non-compact closed geodesic subset in $X_\infty$. We show that $\mathbb{R}^k\times X_\infty$ indeed splits off an $\mathbb{R}^{k+1}$-factor. Suppose that $X_\infty$ does not contain any line. For each $j$, let $z_j\in \{0\}\times Z_\infty$ such that $d(z_j,y_\infty)\ge j$ and let $\gamma_j$ be a minimal geodesic from $y_\infty$ to $z_j$ that lies in $\{0\}\times Z_\infty\subseteq G_\infty y_\infty$. We write the midpoint of $\gamma_j$ as $h_j\cdot y_\infty$ for some $h_j\in G_\infty$. Then $h_j^{-1}\gamma_j$ is a sequence of minimal geodesics which have midpoint $y_\infty$ and have length $\ge j\to\infty$. $h_j^{-1}\gamma_j$ sub-converges to a line in $Y_\infty=\mathbb{R}^k\times X_\infty$. Note that each $h_j$ maps to $\{0\}\times X_\infty$ to itself, thus this limit line is in the $X_\infty$-factor. It follows from the Cheeger-Colding splitting theorem that $\mathbb{R}^k\times X_\infty$ is isometric to $\mathbb{R}^{k+1}\times X'$. 
	
	It remains to show that the orbit $G_\infty y_\infty=\mathbb{R}^k\times Z_\infty$ is indeed $\mathbb{R}^{k+1}\times Z'$ for some $Z'\subseteq X'$. Since the minimal geodesics $h_j^{-1}\gamma_j$ are in $\{0\}\times Z_\infty$, its limit line is contained in $\{0\}\times Z_\infty$ as well. Combined with the fact that $\mathbb{R}^k\times Z_\infty$ is geodesic in $\mathbb{R}^k\times X_\infty=\mathbb{R}^{k+1}\times X'$, we see that $\mathbb{R}^{k}\times Z_\infty$ contains a slice $\mathbb{R}^{k+1}$. Applying Lemma \ref{geo_in_product}, we obtain the desired conclusion.
\end{proof}

By Proposition \ref{noncpt_orbit_split}, for any $s>0$ and any $(Y,y,G)\in\Omega(\widetilde{M},\Gamma)$, we have
$$d_{GH}((sY,y,Gy),(\mathbb{R}\times X_s, (0,x_s),\mathbb{R}\times Z_s))\le \Phi(\epsilon|n)$$
for some length space $X_s$ and a closed geodesic subset $Z_s\subseteq X_s$. If $Z_s$ has large or infinite diameter for all $s>0$, then we shall apply the above Lemma \ref{large_diam_split} to split off an $\mathbb{R}^2$-factor for all $(sY,y,Gy)$.

\begin{lem}\label{small_large_cpt}
	Let $(Y,y)\in\mathcal{M}(n,0)$ and let $G$ be a closed subgroup of $\mathrm{Isom}(Y)$. Let $\delta\in (0,10^{-4})$. Suppose that for all $s>0$, we have
	$$d_{GH}((sY,y,Gy),(\mathbb{R}^k\times X_s,(0,x_s),\mathbb{R}^k\times Z_s))\le\delta,$$
	where $(X_s,x_s)$ is a length space and $Z_s$ is a closed geodesic subset of $X_s$. 
	Then for $L=\delta^{-1/2}$, one of the followings holds:\\
	(1) $\mathrm{diam}(Z_s)>L$ for all $s>0$;\\
	(2) $\mathrm{diam}(Z_s)<L^{-1}$ for all $s>0$;\\
	(3) there is $s>0$ such that $\mathrm{diam}(Z_s)\in [0.9,1.1]$, given that $\delta$ is sufficiently small. 
\end{lem}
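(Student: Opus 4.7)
\textbf{Plan for Lemma \ref{small_large_cpt}.} The plan is to argue by contradiction. Suppose none of (1), (2), (3) hold, and write $D(s) := \mathrm{diam}(Z_s)$. The failure of (1) produces some $s_1$ with $D(s_1) \le L$ and the failure of (2) produces some $s_2$ with $D(s_2) \ge L^{-1}$. The central technical input I would establish is a \emph{scaling continuity} estimate: for any $s > 0$ and any $t \in [L^{-1}, L]$, the hypothesized $\delta$-approximation at scale $s$, viewed after rescaling both source and target by $t$, becomes a $\max(t, t^{-1}) \delta \le L \delta = \delta^{1/2}$ approximation from $(ts Y, y, Gy)$ to $(\mathbb{R}^k \times tX_s, (0, x_s), \mathbb{R}^k \times tZ_s)$ on a ball of radius at least $L$. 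Combining this with the hypothesized $\delta$-approximation at scale $ts$ via the weak triangle inequality for $d_{GH}$ recorded in Section~\ref{sec_pre}, one obtains
$$d_{GH}\bigl((\mathbb{R}^k \times tX_s, (0, x_s), \mathbb{R}^k \times tZ_s),\ (\mathbb{R}^k \times X_{ts}, (0, x_{ts}), \mathbb{R}^k \times Z_{ts})\bigr) \le \Phi(\delta|n),$$
with $\Phi(\delta|n) \to 0$ as $\delta \to 0$. In particular, inside the ball of radius $\Phi(\delta|n)^{-1}$, the sets $tZ_s$ and $Z_{ts}$ are within Hausdorff distance $\Phi(\delta|n)$.

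I would first handle the \emph{middle case}: suppose some $s_0$ has $D(s_0) \in [L^{-1}, L]$. Choose $t = 1/D(s_0) \in [L^{-1}, L]$, so that $tZ_{s_0}$ has diameter exactly $1$. Since $x_{s_0} \in Z_{s_0}$, the set $tZ_{s_0}$ sits inside $\overline{B_1(x_{s_0})}$, well within the ball of radius $\Phi(\delta|n)^{-1}$ for small $\delta$. Scaling continuity gives that $Z_{ts_0}$ is within Hausdorff distance $\Phi(\delta|n)$ of $tZ_{s_0}$ inside this ball. A geodesic-chain argument upgrades this to the global bound $Z_{ts_0} \subseteq B_{1 + \Phi(\delta|n)}(x_{ts_0})$: a hypothetical point $p \in Z_{ts_0}$ with $d(p, x_{ts_0}) > 1 + \Phi(\delta|n)$ would, by the geodesic property of $Z_{ts_0}$, be joined to $x_{ts_0}$ by a chain in $Z_{ts_0}$ of nearly minimal total length, and some intermediate point of the chain would sit in the ball of radius $\Phi(\delta|n)^{-1}$ at distance from $x_{ts_0}$ significantly exceeding $1 + \Phi(\delta|n)$, violating the Hausdorff estimate. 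Hence $D(ts_0) \in [1 - \Phi(\delta|n), 1 + \Phi(\delta|n)] \subseteq [0.9, 1.1]$ for $\delta$ small, giving (3).

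Otherwise $D(s) \in (0, L^{-1}) \cup (L, \infty)$ for every $s$, and $D(s_1) < L^{-1}$, $D(s_2) > L$; swapping if necessary we may assume $s_1 < s_2$. At scale $s_1$ with factor $2$, the set $2Z_{s_1}$ has diameter less than $2L^{-1}$ and is contained in $\overline{B_{2L^{-1}}(x_{s_1})}$, so scaling continuity together with the same geodesic-chain argument yields $D(2 s_1) \le 2L^{-1} + \Phi(\delta|n) < L$ for $\delta$ small. Since $D(2 s_1) \notin [L^{-1}, L]$ by our standing assumption, we are forced into $D(2 s_1) < L^{-1}$. Iterating, $D(2^k s_1) < L^{-1}$ for every integer $k \ge 0$. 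Choosing $k$ with $2^k s_1 \le s_2 < 2^{k+1} s_1$ and applying the same estimate at scale $2^k s_1$ with factor $t = s_2/(2^k s_1) \in [1, 2)$ gives $D(s_2) \le 2L^{-1} + \Phi(\delta|n) < L$, contradicting $D(s_2) > L$.

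The principal obstacle I foresee is making the scaling continuity estimate rigorous in the form of Definition~\ref{def_GH_subset}: one must track how each of conditions (1)--(4) of that definition transforms when both spaces are rescaled by $t$, and then combine the scaled approximation at scale $s$ with the given approximation at scale $ts$ via the weak triangle inequality so as to land back in the closed-subset form of Definition~\ref{def_GH_subset}. A secondary subtlety is the geodesic-chain argument used both in the middle case and in the iteration: it relies on the defining property that extrinsic and intrinsic metrics of a geodesic subset agree, so that any two of its points are joined by chains inside the subset of nearly minimal total length, producing the required intermediate points inside the Hausdorff-controlled ball.
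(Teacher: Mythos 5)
Your proof is correct, uses the same core scaling-continuity idea as the paper, and is in fact somewhat more careful than the paper's own proof in two places. The heart of both arguments is identical: rescale the given $\delta$-approximation at scale $s$ by a factor $t\in[L^{-1},L]=[\delta^{1/2},\delta^{-1/2}]$, observe the error degrades only to $\max(t,t^{-1})\delta\le\delta^{1/2}$, combine with the given approximation at scale $ts$ via the weak triangle inequality, and conclude that the diameter transforms as $\mathrm{diam}(Z_{ts})=t\,\mathrm{diam}(Z_s)\pm\Phi(\delta|n)$ when both quantities are small. Your "middle case" with $t=1/D(s_0)$ is precisely the paper's argument.

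Where your proposal goes further is in the first logical step. The paper asserts that the failure of cases (1) and (2) "means there exists $s>0$ such that $\mathrm{diam}(Z_s)\in[L^{-1},L]$," but the negations of (1) and (2) only produce, a priori, one scale with $\mathrm{diam}\le L$ and another with $\mathrm{diam}\ge L^{-1}$ — these need not coincide, and the function $s\mapsto\mathrm{diam}(Z_s)$ is not obviously continuous since $Z_s$ is only one admissible choice for each $s$. Your doubling iteration — showing $D(s_1)<L^{-1}$ forces $D(2^ks_1)<L^{-1}$ under the standing assumption that no scale lands in $[L^{-1},L]$, then running this into the scale near $s_2$ — supplies exactly the quasi-continuity argument the paper leaves implicit. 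You also flag the geodesic-chain step needed to pass from a local Hausdorff estimate (which only controls $Z_{ts}$ within the ball of radius $\Phi^{-1}$) to a global diameter bound; the paper writes the diameter comparison directly without comment. Both of these fill genuine, if minor, gaps. One small phrasing caveat: "swapping if necessary we may assume $s_1<s_2$" glosses over the fact that the two directions of the iteration (doubling versus halving) are handled by symmetric but not literally identical estimates; this should be spelled out or at least noted explicitly.
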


\begin{proof}
	Suppose that $(Y,y,G)\in\Omega(\widetilde{M},\Gamma)$ does not belong to cases (1) and (2) listed in the statement. This means that there exists $s>0$ such that $(sY,y,G)$ satisfies $d=\mathrm{diam}(Z_s)\in[L^{-1},L].$
	
	We claim that $(d^{-1}sY,y,G)$ has $\mathrm{diam}(Z_{d^{-1}s})\in [0.9,1.1]$. Let $$F:sY\to \mathbb{R}^k\times X_s$$ be an $\delta$-approximation between $(sY,y,Gy)$ and $(\mathbb{R}^k\times X_s,(0,x_s),\mathbb{R}^k\times Z_s)$. We consider its scaling $$d^{-1}F:d^{-1}sY\to \mathbb{R}^k\times (d^{-1}X_s).$$ When $d^{-1}\ge 1$, $d^{-1}F$ is a $d^{-1}\delta$-approximation between $(d^{-1}sY,y,Gy)$ and $(\mathbb{R}^k\times (d^{-1}X_s),(0,x_s),\mathbb{R}^k\times (d^{-1}Z_s))$; when $d^{-1}\le 1$, $d^{-1}F$ is a $d\delta$-approximation between the above two spaces. Since $d\in[L^{-1},L]= [\delta^{1/2},\delta^{-1/2}]$, we see that $d^{-1}F$ shows
	$$d_{GH}((d^{-1}sY,y,Gy),(\mathbb{R}^k\times (d^{-1}X_s),(0,x_s),\mathbb{R}^k\times (d^{-1}Z_s)))\le \delta^{1/2},$$
	where $d^{-1}Z_s$ has diameter $1$. On the other hand, we have assumption 
	$$d_{GH}((d^{-1}sY,y,Gy),(\mathbb{R}^k\times X_{d^{-1}s}, (0,x_{d^{-1}s}),\mathbb{R}^k\times Z_{d^{-1}s}))\le \delta.$$
	Therefore,
	$$\mathrm{diam}(Z_{d^{-1}s})=\mathrm{diam}(d^{-1}Z_s)\pm \Phi(\delta)=1\pm \Phi(\delta).$$
\end{proof}

\begin{proof}[Proof of Proposition \ref{split_all_scales}]
	To avoid confusions, we will write $\Phi_0$ as the estimate in Lemma \ref{large_diam_split} and $\Phi_1$ as the one in Proposition \ref{noncpt_orbit_split}, respectively.
	
	By Proposition \ref{noncpt_orbit_split}, $(Y,y,G)$ satisfies the conditions in Lemma \ref{small_large_cpt} with $k=1$ and $\delta=\Phi_1(\epsilon|n)$. If $(Y,y,G)$ belongs to case (3) of Lemma \ref{small_large_cpt}, then we are done. For case (2), we have
	$$d_{GH}((sY,y,Gy),(\mathbb{R}\times X_s,(0,x_s),\mathbb{R}\times \{x_s\}))\le\Phi'_1(\epsilon|n)$$
	with a slightly increased $\Phi'_1(\epsilon|n)$. Case (1) is where we shall continue the splitting process. It follows from Lemma \ref{large_diam_split} that for all $s>0$,
	$$d_{GH}((sY,y,Gy),(\mathbb{R}^2\times X'_s,(0,x'_s),\mathbb{R}^2\times Z'_s))\le\Phi_0(\Phi_1,\Phi_1^{1/2}|n)=:\Phi_2(\epsilon|n).$$
	Applying the above procedure repeatedly with Lemmas \ref{large_diam_split} and \ref{small_large_cpt}, we end in the desired result.
\end{proof}	
	
\section{Limit orbits as almost Euclidean spaces}\label{sec_eu}
	
In this section, we first rule out case (2) in Proposition \ref{split_all_scales}; this shows that the orbit $Gy$ are almost Euclidean for all $(Y,y,G)\in\Omega(\widetilde{M},\Gamma)$ (Theorem \ref{almost_eu_orbit}). Then we prove Theorem A.

The proof of Theorem \ref{almost_eu_orbit} uses a critical rescaling argument, which is effective in proving uniformity among all spaces in $\Omega(\widetilde{M},\Gamma)$ (see \cite{Pan_eu,Pan_al_stable,Pan_escape}).	

We introduce a notation for convenience. 

\begin{defn}\label{def_approx}
	Let $(Y,y,G)\in\Omega(\widetilde{M},\Gamma)$. In the context of Proposition \ref{split_all_scales}, we call $k$ as the \textit{approximate Euclidean dimension} of $Gy$, written as $\mathrm{EuDim}_A(Gy)$; we also call $\mathrm{diam}(Z_1)$ as an \textit{approximate width} of $Gy$, written as $\mathrm{Wid}_A(Gy)$. 
\end{defn}

For a fixed space $(Y,y,G)\in\Omega(\widetilde{M},\Gamma)$, note that $\mathrm{EuDim}_A(Gy)$ is uniquely determined, while $\mathrm{Wid}_A(Gy)$ may allow a small error up to $\Phi(\epsilon|n)$.
	
\begin{proof}[Proof of Theorem \ref{almost_eu_orbit}]
	We show that for any $(Y,y,G)\in\Omega(\widetilde{M},\Gamma)$, there is an integer $k$ such that
	$$d_{GH}((Y,y,Gy),(\mathbb{R}^k\times X,(0,x),\mathbb{R}^k\times \{x\}))\le \Phi(\epsilon|n).$$
	If we can prove this, then by Proposition \ref{cpt_cnt}, $k$ must be uniform among all spaces in $\Omega(\widetilde{M},\Gamma)$. 
	
	$(Y,y,G)$ has two possibilities as listed in the Proposition \ref{split_all_scales}. It suffices to rule out case (2), that is, there is $s>0$ such that
	$$d_{GH}((sY,y,Gy),(\mathbb{R}^k\times X_s,(0,x_s),\mathbb{R}^k\times Z_s))\le\Phi(\epsilon|n)$$
	with $\mathrm{diam}(Z_s)\in [0.9,1.1]$. We also assume that $(Y,y,G)$ has the smallest $k=\mathrm{EuDim}_A(Gy)$ so that case (2) occurs; in other words, if a space $(W,w,H)\in\Omega(\widetilde{M},\Gamma)$ also belongs to case (2), then $\mathrm{EuDim}_A(Hw)\ge k$. Besides $(sY,y,G)$, we shall also consider its scaling $(10sY,y,G)\in \Omega(\widetilde{M},\Gamma)$. By Proposition \ref{split_all_scales} and the proof of Lemma \ref{small_large_cpt}, we have
	$$d_{GH}((10sY,y,Gy),(\mathbb{R}^k\times X_{10s},(0,x_{10s}),\mathbb{R}^k\times Z_{10s}))\le\Phi(\epsilon|n)$$
	with $\mathrm{diam}(Z_{10s})=10\mathrm{diam}(Z_s)\pm\Phi(\epsilon|n)\in [8,12]$. 
	
	For convenience, we now write 
	$$(Y_1,y_1,G_1):=(10sY,y,G),\quad (Y_2,y_2,G_2):=(sY,y,G);$$
	correspondingly; we also write 
	\begin{align*}
	(\mathbb{R}^k\times V_1,(0,v_1),\mathbb{R}^k\times U_1)&:=(\mathbb{R}^k\times X_{10s},(0,x_{10s}),\mathbb{R}^k\times Z_{10s}),\\
	(\mathbb{R}^k\times V_2,(0,v_2),\mathbb{R}^k\times U_2)&:=(\mathbb{R}^k\times X_{s},(0,x_{s}),\mathbb{R}^k\times Z_{s}).
	\end{align*}
	Note that we have
	$$\mathrm{diam}(U_1)\in [8,12],\quad \mathrm{diam}(U_2)\in [0.9,1.1].$$
	Since both $(Y_1,y_1,G_1)$ and $(Y_2,y_2,G_2)$ are equivariant asymptotic cones of $(\widetilde{M},\Gamma)$, there are sequences $r_i,s_i\to\infty$ such that
	$$(r_i^{-1}\widetilde{M},\tilde{p},\Gamma)\overset{GH}\longrightarrow (Y_1,y_1,G_1),\quad (s_i^{-1}\widetilde{M},\tilde{p},\Gamma)\overset{GH}\longrightarrow (Y_2,y_2,G_2).$$
	After passing to a subsequence, we assume that
	$$t_i:=s_i^{-1}/r_i^{-1}\to\infty.$$
	Setting $(N_i,q_i,\Gamma_i)$ as $(r_i^{-1}\widetilde{M},\tilde{p},\Gamma)$, we have
	$$(N_i,q_i,\Gamma_i)\overset{GH}\longrightarrow (Y_1,y_1,G_1),\quad (t_iN_i,q_i,\Gamma_i)\overset{GH}\longrightarrow (Y_2,y_2,G_2).$$
	
	Next we choose an intermediate scaling sequence $l_i$ as follows. For each $i$, let
	\begin{align*}
	L_i=\{1\le l\le t_i\ |\ &d_{GH}((lN_i,q_i,\Gamma_i),(W,w,H))\le 10^{-3} \text{ for some space}\\
	& (W,w,H)\in\Omega(\widetilde{M},\Gamma) \text{ such that $\mathrm{EuDim}_A(Hw)<k$, or}\\
	& \mathrm{EuDim}_A(Hw)=k \text{ with } \mathrm{Wid}_A(Hw)\le 2 \}.
	\end{align*}
	Since $(Y_2,y_2,G_2)$ satisfies $\mathrm{EuDim}_A(G_2y_2)=k$ and $\mathrm{Wid}_A(G_2y_2)\le 1.1$, it is clear that $t_i\in L_i$ for all $i$ large. We choose $l_i\in L_i$ such that $\inf L_i\le l_i \le \inf L_i+1/i$. 
	
	\textbf{Claim 1:} $\liminf l_i>\Phi(\epsilon|n)^{-1/2}$, where $\Phi(\epsilon|n)$ is the constant in Proposition \ref{split_all_scales}. We argue by contradiction. Suppose that $l_i\to l\in [1,\Phi(\epsilon|n)^{-1/2}]$ for a subsequence. Then
	$$(l_iN_i,q_i,\Gamma_i)\overset{GH}\longrightarrow (lY_1,y_1,G_1).$$
	For each $i$, since $l_i\in L_i$, there is $(W_i,w_i,H_i)\in\Omega(\widetilde{M},\Gamma)$ such that
	$$d_{GH}((l_iN_i,q_i,\Gamma_i),(W_i,w_i,H_i))\le 10^{-3};$$
	moreover, 
	$$d_{GH}((W_i,w_i,H_i),(\mathbb{R}^{m_i}\times X_i,(0,x_i),\mathbb{R}^{m_i}\times Z_i))\le \Phi(\epsilon|n)$$
	with $m_i\le k$, and $\mathrm{diam}(Z_i)\le 2$ if $m_i=k$.
	Recall that
	$$d_{GH}((Y_1,y_1,G_1\cdot y_1),(\mathbb{R}^k\times V_1,(0,v_1),\mathbb{R}^k\times U_1))\le\Phi(\epsilon|n);$$
	let $F$ be an $\Phi(\epsilon|n)$ approximation between them. Then its scaling $lF$ shows that
	$$d_{GH}((lY_1,y_1,G_1\cdot y_1),(\mathbb{R}^k\times lV_1,(0,v_1),\mathbb{R}^k\times lU_1))\le l\cdot \Phi(\epsilon|n)\le \Phi(\epsilon|n)^{1/2}.$$
	Combining the above together, we see that
	$$d_{GH}((\mathbb{R}^{m_i}\times X_i,(0,x_i),\mathbb{R}^{m_i}\times Z_i),(\mathbb{R}^k\times lV_1,(0,v_1),\mathbb{R}^k\times lU_1))\le 10^{-2}+\Phi'(\epsilon|n)$$
	for $i$ large. If $m_i<k$, then by our choice of $k$, $\mathrm{diam}(Z_i)=0$ and thus the above estimate cannot hold. If $m_i=k$, then $\mathrm{diam}(Z_i)\le 2$ and $\mathrm{diam}(lU_1)\ge 8$ also lead to a contradiction. This proves Claim 1.
	
	Next we consider the convergence under the critical rescaling $l_i$:
	$$(l_iN_i,q_i,\Gamma_i)\overset{GH}\longrightarrow (Y',y',G')\in \Omega(\widetilde{M},\Gamma).$$
	
	\textbf{Claim 2:} $\mathrm{EuDim}_A(G'y')\le k$; $\mathrm{Wid}_A(G'y')\le 3$ when $\mathrm{EuDim}_A(G'y')=k$. By Proposition \ref{split_all_scales}, for each $s>0$, we have
	$$d_{GH}((sY',y',G'y'),(\mathbb{R}^{m'}\times X'_{s},(0,x'_s),\mathbb{R}^{m'}\times Z'_{s}))\le\Phi(\epsilon|n),$$
	where $m'=\mathrm{EuDim}_A(G'y').$ Similar to the proof of Claim 1, for each $i$ we can find $(W_i,w_i,H_i)$ that is $10^{-3}$-close to $(l_iN_i,q_i,\Gamma_i)$, and $(\mathbb{R}^{m_i}\times X_i,(0,x_i),\mathbb{R}^{m_i}\times Z_i)$ that is $\Phi(\epsilon|n)$-close to $(W_i,w_i,H_i)$. This shows that
	$$d_{GH}((\mathbb{R}^{m_i}\times X_i,(0,x_i),\mathbb{R}^{m_i}\times Z_i),(\mathbb{R}^{m'}\times X'_{1},(0,x'_1),\mathbb{R}^{m'}\times Z'_{1}))\le 10^{-2}+\Phi'(\epsilon|n)$$
	for all $i$ large. Recall that either $m_i<k$ with $\mathrm{diam}(Z_i)=0$, or $m_i=k$ with $\mathrm{diam}(Z_i)\le 2$. We conclude that $m'<k$ or $m'=k$ with $\mathrm{diam}(Z'_1)\le 3$. This proves Claim 2.
	
	To reach a final contradiction, we consider 
	$$(10^{-1}l_iN_i,q_i,\Gamma_i)\overset{GH}\longrightarrow (10^{-1}Y',y',G').$$
	Note that the scaled down limit $(10^{-1}Y',y',G')$ satisfies $\mathrm{EuDim}_A(10^{-1}G'y')<k$ with $\mathrm{Wid}_A(10^{-1}G'y')=0$, or $\mathrm{EuDim}_A(10^{-1}G'y')=k$ with $\mathrm{Wid}_A(10^{-1}G'y')\le 1$. In other words, $(10^{-1}Y',y',G')$ satisfies the condition described in the definition of $L_i$. Since $\liminf l_i>\Phi(\epsilon|n)^{-1/2}$ as showed in Claim 1, we have $10^{-1}l_i\in [1,t_i]$ for all $i$ large. It follows that $10^{-1}l_i\in L_i$ for all $i$ large, which contradicts our choice of $l_i\in L_i$ as $\inf L_i\le l_i\le \inf L_i+1/i$. This completes the proof.
\end{proof}	
	
We show that the converse of Theorem \ref{almost_eu_orbit} also holds.

\begin{prop}\label{converse}
	Let $(M,p)$ be an open manifold of $\mathrm{Ric}\ge 0$. Suppose that there is an integer $k$ such that for any $(Y,y,G)\in\Omega(\widetilde{M},\Gamma)$, we have
	$$d_{GH}((Y,y,Gy),(\mathbb{R}^k\times X,(0,x),\mathbb{R}^k\times \{x\}))\le \eta,$$
	where $(X,x)$ is a length space that depends on $(Y,y)$. Then $E(M,p)\le \Phi(\eta|n)$.
\end{prop}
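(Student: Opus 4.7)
My plan is a proof by contradiction, extracting a scaling limit from a contradicting sequence and comparing it to the product model via the Pythagorean identity on $\mathbb{R}^k \times X$. Suppose $E(M,p) > \Phi(\eta|n)$ for some function $\Phi$ with $\lim_{\eta\to 0}\Phi(\eta|n)=0$, to be determined below (the calculation will produce $\Phi(\eta|n)\sim \eta^{1/4}$). I pick $\gamma_i\in\Gamma$ with $r_i:=|\gamma_i|\to\infty$ and $d_H(p,c_{\gamma_i})/r_i\ge \Phi(\eta|n)$; after rescaling by $r_i^{-1}$ and passing to a subsequence I obtain an equivariant Gromov--Hausdorff limit $(r_i^{-1}\widetilde M,\tilde p,\Gamma)\to (Y,y,G)\in\Omega(\widetilde M,\Gamma)$ with $\gamma_i\to g\in G$. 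Because $c_{\gamma_i}$ is a minimizing geodesic loop, $|\gamma_i|=d_{\widetilde M}(\tilde p,\gamma_i\tilde p)$, so $d_Y(y,gy)=1$ in the limit and the rescaled lifts $\tilde c_{\gamma_i}$ Arzel\`a--Ascoli to a unit minimizing geodesic $\sigma\colon[0,1]\to Y$ from $y$ to $gy$.

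Next I transfer the escape-rate inequality into the limit. Since $d_M(p,\cdot)=d_{\widetilde M}(\Gamma\tilde p,\cdot)$ under the covering projection, the lower bound on $d_H(p,c_{\gamma_i})$ produces times $t_i^*\in[0,1]$ with $d_{\widetilde M}(\Gamma\tilde p,\tilde c_{\gamma_i}(t_i^* r_i))/r_i\ge \Phi(\eta|n)-o(1)$; passing to a further subsequence along which $t_i^*\to t^*$ and using continuity of distance-to-orbit under equivariant GH convergence gives $d_Y(\sigma(t^*),Gy)\ge \Phi(\eta|n)=:\epsilon_0$.

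Then I apply the hypothesis. Let $\phi$ be an $\eta$-approximation realizing $d_{GH}((Y,y,Gy),(\mathbb{R}^k\times X,(0,x),\mathbb{R}^k\times\{x\}))\le\eta$; the ball of validity $B_{1/\eta}(y)$ comfortably contains $\sigma$. Write $\phi(y)=(a,y_X)$, $\phi(gy)=(b,g_X)$, and $\phi(\sigma(t^*))=(c,q)$ in $\mathbb{R}^k\times X$. Combining Definition \ref{def_GH_subset}(1) with the Hausdorff closeness of $\phi(Gy)$ to the slice (the remark immediately following Definition \ref{def_GH_subset}) gives $|a|,d_X(y_X,x),d_X(g_X,x)\le O(\eta)$, while the distance-to-orbit bound for $\sigma(t^*)$ transports to $d_X(q,x)\ge \epsilon_0-O(\eta)$. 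Condition (2) of the approximation gives $d(\phi(y),\phi(\sigma(t^*)))=t^*\pm\eta$, $d(\phi(\sigma(t^*)),\phi(gy))=(1-t^*)\pm\eta$, and $d(\phi(y),\phi(gy))=1\pm\eta$.

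Finally I expand these distances via the Pythagorean formula on $\mathbb{R}^k\times X$. From $(t^*+\eta)^2\ge |a-c|^2+(\epsilon_0-O(\eta))^2$ and its twin I get $|a-c|\le \sqrt{(t^*)^2-\epsilon_0^2+O(\eta)}$ and $|c-b|\le \sqrt{(1-t^*)^2-\epsilon_0^2+O(\eta)}$, while the slice-closeness of the endpoints forces $|a-b|\ge 1-O(\eta)$. Combining the Euclidean triangle inequality $|a-b|\le |a-c|+|c-b|$ with the elementary bound $\sqrt{u^2-\epsilon_0^2}+\sqrt{v^2-\epsilon_0^2}\le \sqrt{(u+v)^2-4\epsilon_0^2}$ for $u,v\ge\epsilon_0$ with $u+v=1$ (verified by squaring twice to reduce to $(u-v)^2\ge 0$) yields $1-O(\eta^{1/2})\le \sqrt{1-4\epsilon_0^2}$, hence $\epsilon_0\le C(n)\eta^{1/4}$; taking $\Phi(\eta|n):=2C(n)\eta^{1/4}$ contradicts the assumption. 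The step I expect to be the main obstacle is extracting genuine Hausdorff closeness of $\phi(Gy)$ and $\mathbb{R}^k\times\{x\}$ inside $\mathbb{R}^k\times X$ from Definition \ref{def_GH_subset}'s asymmetric conditions, which is what pins down the $X$-coordinates of $\phi(gy)$ and $\phi(\sigma(t^*))$; once that is in hand, the Pythagorean bookkeeping is mechanical.
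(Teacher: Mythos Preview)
Your argument is correct and shares its first half with the paper's proof: both pick $\gamma_i$ realizing the escape rate, rescale by $r_i^{-1}=|\gamma_i|^{-1}$, pass to an equivariant limit $(Y,y,G)$ with $\gamma_i\to g$, $d(y,gy)=1$, let the lifted loops converge to a minimal geodesic $\sigma$ from $y$ to $gy$, and record a point $\sigma(t^*)$ with $d_Y(\sigma(t^*),Gy)\ge \epsilon_0$. The divergence is in the last step. The paper packages the endgame into a separate Lemma~\ref{geo_almost_prod}: by a \emph{second} contradiction/limiting argument, any space $(Y,y,N)$ that is $\eta$-close to $(\mathbb{R}^k\times X,(0,x),\mathbb{R}^k\times\{x\})$ has the property that minimal geodesics between points of $N$ stay in the $\Phi(\eta|n)$-neighborhood of $N$; this follows immediately in the limit because segments between slice points in $\mathbb{R}^k\times X$ lie in the slice. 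You instead push $y$, $gy$, $\sigma(t^*)$ through the $\eta$-approximation and do the Pythagorean computation by hand in $\mathbb{R}^k\times X$, extracting the explicit bound $\epsilon_0\lesssim \eta^{1/4}$. Your route trades the second compactness argument for bookkeeping and yields an effective rate; the paper's route is cleaner and isolates the geometric content in a reusable lemma, at the cost of a non-explicit $\Phi$.

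On the point you flag: the inequality $d_X(q,x)\ge \epsilon_0-O(\eta)$ follows from condition~(4) for $\phi$ alone (every slice point is $\eta$-close to $\phi(Gy)$, so $\phi(\sigma(t^*))$ is $\ge \epsilon_0-2\eta$ from the slice). The other direction, $d_X(g_X,x)\le O(\eta)$, genuinely needs the two-sided approximation---condition~(4) for $\phi$ gives only $A_2\subseteq B_\eta(\phi(A_1))$, not the reverse. The cleanest fix is to realize the $d_{GH}$ bound by isometric embeddings of $Y$ and $\mathbb{R}^k\times X$ into a common metric space $W$ with $d^W_H(Gy,\mathbb{R}^k\times\{x\})\le O(\eta)$ on the relevant ball, and run your Pythagorean estimates in $W$; alternatively, use $\psi$'s condition~(4) to find $(v,x)$ with $d_Y(gy,\psi(v,x))\le\eta$ and compare distances via $\psi$'s almost-isometry. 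Either way the gap is routine once you see that the one map $\phi$ is not enough.
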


We prove a lemma below before proving Proposition \ref{converse}.

\begin{lem}\label{geo_almost_prod}
	Let $(Y,y)\in\mathcal{M}(n,0)$ and let $N$ be a closed subset of $Y$ containing $y$. Suppose that
	$$d_{GH}((Y,y,N),(\mathbb{R}^k\times X,(0,x),\mathbb{R}^k\times \{x\}))\le \eta$$ 
	for some length space $(X,x)$. Then for any point $p\in N$ with $d(p,y)\le 10$ and any minimal geodesic $\sigma$ from $y$ to $p$, $\gamma$ must be contained in the $\Phi(\eta|n)$-neighborhood of $N$. 
\end{lem}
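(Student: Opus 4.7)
The plan is to prove Lemma \ref{geo_almost_prod} by a standard contradiction/compactness argument, using the fact that in an exact metric product $\mathbb{R}^k\times X$, any minimal geodesic between two points of the same slice $\mathbb{R}^k\times\{x\}$ must remain in that slice.

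First, I would suppose for contradiction that the conclusion fails. Then there exist a fixed $\delta_0>0$, a sequence $\eta_i\to 0$, pointed spaces $(Y_i,y_i)\in\mathcal{M}(n,0)$ with closed subsets $N_i\ni y_i$, length spaces $(X_i,x_i)$, and $\eta_i$-approximations realizing
\[
d_{GH}\bigl((Y_i,y_i,N_i),(\mathbb{R}^k\times X_i,(0,x_i),\mathbb{R}^k\times\{x_i\})\bigr)\le\eta_i,
\]
together with points $p_i\in N_i$ with $d(p_i,y_i)\le 10$ and minimal geodesics $\sigma_i$ from $y_i$ to $p_i$ containing points $q_i$ with $d(q_i,N_i)\ge\delta_0$.

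Next, I would extract limits. By precompactness of $\mathcal{M}(n,0)$ together with Remark \ref{rem_subset_precpt}, after passing to a subsequence I obtain
\[
(Y_i,y_i,N_i)\overset{GH}\longrightarrow (Y_\infty,y_\infty,N_\infty),\qquad (\mathbb{R}^k\times X_i,(0,x_i),\mathbb{R}^k\times\{x_i\})\overset{GH}\longrightarrow (\mathbb{R}^k\times X_\infty,(0,x_\infty),\mathbb{R}^k\times\{x_\infty\}),
\]
and since $\eta_i\to 0$ the two triples share a limit, so $Y_\infty=\mathbb{R}^k\times X_\infty$, $y_\infty=(0,x_\infty)$, and $N_\infty=\mathbb{R}^k\times\{x_\infty\}$. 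By the Arzel\`a--Ascoli argument in Gromov--Hausdorff limits, $\sigma_i$ subconverges (as $1$-Lipschitz curves defined on uniformly bounded intervals) to a minimal geodesic $\sigma_\infty$ in $Y_\infty$ from $y_\infty$ to some $p_\infty\in N_\infty$, and the interior points $q_i$ converge to a point $q_\infty\in\sigma_\infty$ with $d(q_\infty,N_\infty)\ge\delta_0$.

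The core observation is then that in the product metric on $\mathbb{R}^k\times X_\infty$, writing $\sigma_\infty(t)=(\alpha(t),\beta(t))$ and noting that both endpoints project to $x_\infty$ in $X_\infty$, the length inequality
\[
L(\sigma_\infty)=\int\sqrt{|\dot\alpha|^2+|\dot\beta|^2}\,dt\ \ge\ \int|\dot\alpha|\,dt\ \ge\ |p_\infty-y_\infty|_{\mathbb{R}^k}=d_{Y_\infty}(y_\infty,p_\infty),
\]
combined with minimality, forces $\dot\beta\equiv 0$, so $\sigma_\infty\subseteq\mathbb{R}^k\times\{x_\infty\}=N_\infty$. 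This contradicts $d(q_\infty,N_\infty)\ge\delta_0$ and completes the proof.

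The only mild obstacle I anticipate is verifying that the $q_i$ indeed converge along the converging geodesics so that the limit witnesses the separation from $N_\infty$, but this is immediate from the Arzel\`a--Ascoli type argument: parametrize each $\sigma_i$ by arclength on $[0,d(y_i,p_i)]\subseteq[0,10]$ and pass to a subsequential uniform limit. Everything else is routine given Remark \ref{rem_subset_precpt} and the product-metric length computation.
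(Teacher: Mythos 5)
Your proposal is correct and follows essentially the same contradiction/compactness argument as the paper: extract a subsequential limit $(Y_\infty,y_\infty,N_\infty)=(\mathbb{R}^k\times X_\infty,(0,x_\infty),\mathbb{R}^k\times\{x_\infty\})$, pass the geodesics $\sigma_i$ to a limit geodesic $\sigma_\infty$, and use the fact that a minimal geodesic in a metric product between two points of the same slice must lie in that slice. The only (minor) point worth noting is that the paper writes the contradiction sequence with possibly varying $k_i$ and passes to a further subsequence where $k_i$ is constant (since $k_i\le n$), whereas you fix $k$ from the start; this is easily repaired and does not affect the argument.
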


\begin{proof}
	We argue by contradiction. Suppose that there are $\delta>0$ and a sequence of spaces $(Y_i,y_i,N_i)$ with
	$$d_{GH}((Y_i,y_i,N_i),(\mathbb{R}^{k_i}\times X_i,(0,x_i),\mathbb{R}^{k_i}\times \{x_i\}))\to 0$$
	but for some minimal geodesic $\sigma_i$ from $y_i$ to some point $p_i\in N_i$ with $d(y_i,p_i)\le 10$, $\sigma_i$ is not contained in the $\delta$-neighborhood of $N_i$. After passing to a subsequence, we have convergence
	$$(Y_i,y_i,N_i)\overset{GH}\longrightarrow (Y_\infty,y_\infty,N_\infty)=(\mathbb{R}^k\times X_\infty,(0,x_\infty),\mathbb{R}^k\times \{x_\infty\}).$$
	For this subsequence, we can also assume that $p_i\to p_\infty \in \mathbb{R}^k\times \{x_\infty\}$ and $\sigma_i$ converges to a minimal geodesic $\sigma_\infty$ from $(0,x_\infty)$ to $p_\infty$. By hypothesis, $\sigma_\infty$ is not contained in the $\delta/2$-neighborhood of $N_\infty$. On the other hand, as a segment in the metric product, $\sigma_\infty$ must be contained in $\mathbb{R}^k\times \{x_\infty\}=N_\infty$. A contradiction.
\end{proof}
	
\begin{proof}[Proof of Proposition \ref{converse}]
	We write $\epsilon=E(M,p)$. Let $\gamma_i\in \pi_1(M,p)$ be a sequence with $r_i=d(\gamma_i\tilde{p},\tilde{p})\to\infty$ and $c_i$ be a sequence of representing geodesic loops based at $p$ such that
	$$\epsilon_i:=\dfrac{d_H(x,c_i)}{r_i}\to \epsilon.$$
	Let $\sigma_i$ be the lift of $c_i$ in $\widetilde{M}$ starting from $\tilde{p}$. Then by our choice $\sigma_i$ is not contained in $\pi^{-1}(B_{\epsilon_ir_i/2}(p))$, where $\pi:(\widetilde{M},\tilde{p})\to (M,p)$ is the covering map. For a convergent subsequence
	\begin{center}
		$\begin{CD}
		(r^{-1}_i\widetilde{M},\tilde{p},\Gamma,\gamma_i) @>GH>> 
		(Y,y,G,g)\\
		@VV\pi V @VV\pi V\\
		(r^{-1}_iM,p) @>GH>> (Z=Y/G,z),
		\end{CD}$
	\end{center}
    it is clear that $d(y,gy)=1$. We can also assume that $\sigma_i$ converges to a minimal geodesic $\sigma$ from $y$ to $gy$. We also know that $\sigma$ is not contained in $\pi^{-1}(B_{\epsilon/3}(z))$. On the other hand, by assumption and Lemma \ref{geo_almost_prod}, $\sigma$ should be contained in a $\Phi(\eta|n)$-neighborhood of $Gy$, that is, $\pi^{-1}(B_{\Phi(\eta|n)}(z))$. This shows that $\epsilon\le 3\Phi(\eta|n)$.
\end{proof}

Combined with the results previously, we can also obtain the converse of Proposition \ref{limit_delta_geodesic}.

\begin{cor}\label{cor_converse}
	Let $(M,p)$ be an open manifold of $\mathrm{Ric}\ge 0$ with an infinite fundamental group. If the orbit $Gy$ is $\eta$-geodesic for all $(Y,y,G)\in\Omega(\widetilde{M},\Gamma)$, then $E(M,p)\le \Phi(\eta|n)$.
\end{cor}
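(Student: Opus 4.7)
The plan is to reverse-engineer the conclusion of Theorem \ref{almost_eu_orbit} directly from the $\eta$-geodesic hypothesis, then apply Proposition \ref{converse}. The key observation is that our hypothesis is exactly the conclusion of Proposition \ref{limit_delta_geodesic}, with $\eta$ in place of $\delta_\epsilon$; so the idea is to feed $\eta$-geodesicness into the machinery of Sections \ref{sec_geo}--\ref{sec_eu} and simply track the constants.

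Inspecting Lemma \ref{noncpt_space_split}, Proposition \ref{noncpt_orbit_split}, Lemmas \ref{large_diam_split}--\ref{small_large_cpt}, Proposition \ref{split_all_scales}, and the critical-rescaling argument in Theorem \ref{almost_eu_orbit}, the bound $E(M,p)\le\epsilon$ enters in only two places: through $\delta_\epsilon$-geodesicness of limit orbits (supplied by Proposition \ref{limit_delta_geodesic}) and through non-compactness of limit orbits (Lemma \ref{non_cpt_orbit}). The first is given by hypothesis, and once non-compactness is re-established the entire chain yields
$$d_{GH}((Y,y,Gy),(\mathbb{R}^k\times X,(0,x),\mathbb{R}^k\times\{x\}))\le\Phi(\eta|n)$$
uniformly over $\Omega(\widetilde{M},\Gamma)$; Proposition \ref{converse} then delivers $E(M,p)\le\Phi(\eta|n)$.

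The hard part is therefore the non-compactness step, since the proof of Lemma \ref{non_cpt_orbit} relies on finite generation of $\Gamma$ obtained from $E(M,p)<1/2$ via \cite[Lemma 2.1]{Pan_escape}, which is not directly available. My approach would be to mimic Lemma \ref{non_cpt_orbit} but replace its bounded-length word decomposition by a chain produced by the $\eta$-geodesic hypothesis itself at an auxiliary rescaling matched to a distant element: supposing $Gy\subseteq B_D(y)$ in $(Y,y,G)$ with rescaling $r_i$, one picks $\gamma^*_i \in \Gamma$ with $L_i := d(\gamma^*_i\tilde p,\tilde p)$ in a window proportional to $r_iD/\eta$, considers the cone rescaled by $L_i$ in which the limit of $\gamma^*_i$ sits at distance $1$ from the basepoint, and lifts the $\eta$-geodesic chain in that cone back to $\Gamma$-elements at intermediate scales; for $\eta$ small enough some lift lands in $[r_iD, 2r_iD]$ and gives an orbit point outside $\overline{B_D(y)}$ in the original cone, a contradiction. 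The technical subtleties are the existence of $\gamma^*_i$ in the prescribed length window and control of approximation errors when lifting; either of these can be handled by iterating the chain procedure across nearby scales, or by directly adapting the short-generator argument behind \cite[Lemma 2.1]{Pan_escape} to the $\eta$-geodesic setting to produce finite generation of $\Gamma$ and thus restore Lemma \ref{non_cpt_orbit} verbatim.
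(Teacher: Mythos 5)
Your reading of the paper's dependency chain is accurate, and your overall strategy matches the paper's exactly: the hypothesis is precisely the conclusion of Proposition \ref{limit_delta_geodesic} with $\eta$ replacing $\delta_\epsilon$, so one re-runs Lemma \ref{noncpt_space_split} through Theorem \ref{almost_eu_orbit} to reach $d_{GH}((Y,y,Gy),(\mathbb{R}^k\times X,(0,x),\mathbb{R}^k\times\{x\}))\le\Phi(\eta|n)$ for all $(Y,y,G)\in\Omega(\widetilde M,\Gamma)$, and then applies Proposition \ref{converse}. The paper's own proof is literally this two-sentence argument.

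You are also right to flag that this one-line proof glosses over non-compactness of $Gy$: Lemma \ref{noncpt_space_split} needs Lemma \ref{non_cpt_orbit}, which is stated under $E(M,p)<1/2$ and proved via finite generation from \cite[Lemma 2.1]{Pan_escape}, and neither is directly supplied here. That is a genuine omission and you identified the right missing input. However, the specific repair you sketch --- choosing $\gamma^*_i$ with $L_i:=d(\gamma^*_i\tilde p,\tilde p)$ in a window proportional to $r_iD/\eta$ --- does not work as stated: if $Gy\subseteq\overline{B_D}(y)$ in the scale-$r_i$ cone, equivariant GH convergence forces an orbit-length gap just above $Dr_i$ whose width is \emph{a priori} uncontrolled, so there need not be any element in that window. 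A clean version of your last alternative (``adapt the short-generator argument'') is a minimality argument: let $L_i$ be the \emph{smallest} orbit length exceeding $(D+1)r_i$, which exists by discreteness of the orbit and by infiniteness of $\Gamma$; the gap forces $L_i/r_i\to\infty$. Pass to a subsequence so that $(L_i^{-1}\widetilde M,\tilde p,\Gamma)$ converges to some $(Y',y',G')\in\Omega(\widetilde M,\Gamma)$ with the realizing elements converging to $g'\in G'$, $d(g'y',y')=1$. The $\eta$-geodesic chain from $y'$ to $g'y'$ has jumps $\le\eta$, so by the intermediate-value trick along the chain there is a chain point at distance in $(1/2,1/2+\eta]$ from $y'$; pulling it back produces $h_i\in\Gamma$ with $d(h_i\tilde p,\tilde p)$ in $\bigl((1/2)L_i-o(L_i),(1/2+\eta)L_i+o(L_i)\bigr)$, which for $\eta<1/2$ and $i$ large lies strictly between $(D+1)r_i$ and $L_i$, contradicting minimality of $L_i$. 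With non-compactness restored, the remainder of your argument goes through and coincides with the paper's.
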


\begin{proof}
	From the proof of Proposition \ref{split_all_scales} and Theorem \ref{almost_eu_orbit}, we see that $Gy$ being $\eta$-geodesic from all $(Y,y,G)\in\Omega(\widetilde{M},\Gamma)$ implies that
	$$d_{GH}((Y,y,Gy),(\mathbb{R}^k\times X,(0,x),\mathbb{R}^k\times \{x\}))\le \Phi(\eta|n),$$
	where $(X,x)$ is a length space depending on $(Y,y)$. Together with Proposition \ref{converse}, the result follows.
\end{proof}


\begin{cor}
	Let $(M,p)$ be an open manifold of $\mathrm{Ric}\ge 0$ with an infinite fundamental group. If $E(M,p)\le \epsilon$, then $E(M,q)\le \Phi(\epsilon|n)$ for all $q\in M$.
\end{cor}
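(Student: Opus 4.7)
The plan is to combine Theorem~\ref{almost_eu_orbit} with Proposition~\ref{converse}, observing that the set $\Omega(\widetilde{M},\Gamma)$ of equivariant asymptotic cones does not depend on which basepoint lift we use. The key point is that any two lifts $\tilde{p}, \tilde{q}\in \widetilde{M}$ of points $p,q\in M$ satisfy $d(\tilde{p},\tilde{q})\le d_M(p,q)<\infty$, so after rescaling by $r_i\to\infty$ the basepoints collapse.

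More precisely, I would first fix lifts $\tilde{p}$ and $\tilde{q}$ and verify that whenever
$$(r_i^{-1}\widetilde{M},\tilde{p},\Gamma)\overset{GH}\longrightarrow (Y,y,G),$$
one also has $(r_i^{-1}\widetilde{M},\tilde{q},\Gamma)\overset{GH}\longrightarrow (Y,y,G)$ with the same limit and the same basepoint, since $r_i^{-1}d(\tilde{p},\tilde{q})\to 0$. Consequently the sets $\Omega(\widetilde{M},\tilde{p},\Gamma)$ and $\Omega(\widetilde{M},\tilde{q},\Gamma)$ coincide, and we may simply write $\Omega(\widetilde{M},\Gamma)$.

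Next, invoking the hypothesis $E(M,p)\le\epsilon$ together with Theorem~\ref{almost_eu_orbit}, I obtain an integer $k$ such that for every $(Y,y,G)\in\Omega(\widetilde{M},\Gamma)$,
$$d_{GH}((Y,y,Gy),(\mathbb{R}^k\times X,(0,x),\mathbb{R}^k\times\{x\}))\le\Phi(\epsilon|n),$$
where $(X,x)$ is some length space depending on $(Y,y)$. Since this property refers only to the intrinsic triple $(Y,y,G)$ and not to which basepoint produced it, the same estimate holds for every equivariant asymptotic cone arising from the sequence $(r_i^{-1}\widetilde{M},\tilde{q},\Gamma)$. Then applying Proposition~\ref{converse} with the basepoint $q$ in place of $p$ yields $E(M,q)\le \Phi(\Phi(\epsilon|n)\,|\,n)=\Phi(\epsilon|n)$.

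There is no real obstacle here beyond the bookkeeping about basepoints; the substance has already been carried out in Theorem~\ref{almost_eu_orbit} and Proposition~\ref{converse}. The only thing one must double-check is the basepoint invariance of $\Omega(\widetilde{M},\Gamma)$, which follows immediately from the triangle inequality applied to Gromov--Hausdorff approximations after rescaling by $r_i\to\infty$.
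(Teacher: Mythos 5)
Your proposal is correct and follows essentially the same route as the paper: observe that the deck transformation group and hence $\Omega(\widetilde{M},\Gamma)$ is the same whether one works with basepoint $p$ or $q$ (the rescaled basepoints collapse together), apply Theorem~\ref{almost_eu_orbit}, and then invoke Proposition~\ref{converse} at the basepoint $q$. One small imprecision: it is not true that \emph{any} two lifts $\tilde p,\tilde q$ satisfy $d(\tilde p,\tilde q)\le d_M(p,q)$; rather, one should say that a lift $\tilde q$ of $q$ can be \emph{chosen} with $d(\tilde p,\tilde q)\le d_M(p,q)$ (by lifting a path from $p$ to $q$), which is all that is needed for the basepoint comparison.
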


\begin{proof}
	We write $\Gamma_p=\pi_1(M,p)$ and $\Gamma_q=\pi_1(M,q)$ for convenience. Note that
	$$\Gamma_p\cdot \tilde{p}=\pi^{-1}(p)=\Gamma_q\cdot \tilde{p},$$
	where $\pi:\widetilde{M}\to M$ is the covering map. Let $r_i\to\infty$ be any sequence. With respect to the convergence
	$$(r_i^{-1}\widetilde{M},\tilde{p},\Gamma_p)\overset{GH}\longrightarrow (Y,y,G),$$
	$\Gamma_p\cdot \tilde{p}=\Gamma_q\cdot \tilde{p}$ converges to $Gy$. By Theorem \ref{almost_eu_orbit}, there is an integer $k$ and a length metric space $(X,x)$ such that 
	$$d_{GH}((Y,y,Gy),(\mathbb{R}^k\times X,(0,x),\mathbb{R}^k\times \{x\}))\le \Phi(\epsilon|n).$$
	Under the scaling $r_i^{-1}$, $\Gamma_q\cdot \tilde{q}$ and $\Gamma_q\cdot \tilde{p}$ converges to the same limit. As a result, the conditions in Proposition \ref{converse} hold for $(M,q)$ with $\eta=\Phi(\epsilon|n)$. Applying Proposition \ref{converse}, we conclude that $E(M,q)\le \Phi'(\epsilon|n)$.
\end{proof}

We move on to prove Theorem A. We first show that any transitive nilpotent group action on an almost Euclidean orbit is by almost translations, which is a quantitative version of \cite[Lemma 3.10]{Pan_escape}.
	
\begin{lem}\label{almost_trans}
	Let $(Y,y)\in \mathcal{M}(n,0)$ and $G$ be a closed subgroup of $\mathrm{Isom}(Y)$. Suppose that\\
	(1) $d_{GH}((Y,y,Gy),(\mathbb{R}^k\times X,(0,x),\mathbb{R}^k\times\{x\}))\le \delta$ for some length space $(X,x)$,\\
	(2) $G$ has a closed nilpotent subgroup $H$ of nilpotency length $\le n$ that acts transitively on $Gy$.\\
	Then 
	$$d(h^2y,y)\ge (2-\Phi(\delta|n))\cdot d(hy,y)$$
	for all $h\in H$ with $d(hy,y)\le 10$.
\end{lem}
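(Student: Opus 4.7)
The plan is a compactness-contradiction argument that reduces the quantitative statement to its non-quantitative analogue \cite[Lemma 3.10]{Pan_escape}: a closed nilpotent subgroup of $\mathrm{Isom}(\mathbb{R}^k)$ acting transitively on $\mathbb{R}^k$ acts by translations, so every element $h$ satisfies $d(h^2\cdot 0,0)=2d(h\cdot 0,0)$.

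Suppose the conclusion fails. Then there exist $\epsilon_0>0$ and a sequence $(Y_i,y_i,G_i,H_i,h_i)$ meeting the hypotheses with $\delta_i\to 0$, $a_i:=d(h_iy_i,y_i)\in(0,10]$, yet $d(h_i^2y_i,y_i)<(2-\epsilon_0)a_i$. After a subsequence, $k_i\equiv k$ is constant. The key choice is to rescale by $s_i^{-1}$ with $s_i:=\max(a_i,\sqrt{\delta_i})$: in the rescaled metric the orbit-approximation parameter becomes at most $\delta_i/s_i\le\sqrt{\delta_i}\to 0$, while $d(h_iy_i,y_i)=a_i/s_i\in(0,1]$ stays in a compact interval.

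Using the precompactness of pointed equivariant Gromov-Hausdorff convergence together with Remark \ref{rem_subset_precpt} for the orbit subsets, extract a further subsequence with
$$(s_i^{-1}Y_i,y_i,G_i,H_i,h_i)\overset{GH}\longrightarrow (Y_\infty,y_\infty,G_\infty,H_\infty,h_\infty),$$
and, since $\delta_i/s_i\to 0$, the limit orbit is exactly Euclidean: $(Y_\infty,y_\infty,G_\infty y_\infty)=(\mathbb{R}^k\times X_\infty,(0,x_\infty),\mathbb{R}^k\times\{x_\infty\})$. The closed nilpotent $H_i$ converge to a closed subgroup $H_\infty\le G_\infty$ that still has nilpotency length $\le n$ (each iterated-commutator identity is a closed condition under Hausdorff convergence of closed subgroups of the Lie group $G_\infty$) and still acts transitively on the limit orbit. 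Applying \cite[Lemma 3.10]{Pan_escape}, the restriction of $H_\infty$ to $\mathbb{R}^k\times\{x_\infty\}$ is by translations; writing $\alpha:=\lim a_i/s_i\in[0,1]$, this forces $d(h_\infty^2y_\infty,y_\infty)=2\alpha$. On the other hand the violated inequality passes to $d(h_\infty^2y_\infty,y_\infty)\le(2-\epsilon_0)\alpha$, so $2\alpha\le(2-\epsilon_0)\alpha$ gives $\alpha=0$, and the argument above succeeds unless $a_i=o(\sqrt{\delta_i})$.

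The main obstacle is therefore this residual sub-case $\alpha=0$, in which the displacement $a_i$ is smaller than the approximation resolution $\sqrt{\delta_i}$: both sides of the bad inequality collapse at scale $s_i$ in the limit. I plan to handle it by rescaling more aggressively to $a_i^{-1}$ (so that $d(h_iy_i,y_i)=1$) and performing a diagonal/intermediate-scale extraction: one chooses auxiliary scales $s'_i$ with $a_i\ll s'_i$ but $s'_i^{-1}\delta_i\to 0$, so that at scale $s'_i$ the orbit is still close to $\mathbb{R}^k$ and the $H_i$-action passes to a closed nilpotent limit $H_\infty'$ acting transitively on a Euclidean slice through the limit basepoint $z_\infty$. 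Applying \cite[Lemma 3.10]{Pan_escape} once more forces $h_\infty'$ to act as a translation of magnitude $1$, contradicting $d((h_\infty')^2z_\infty,z_\infty)\le 2-\epsilon_0<2$. Executing this blow-up while simultaneously preserving the Euclidean-factor structure of the orbit and the bounded nilpotency length of the group is the delicate technical point.
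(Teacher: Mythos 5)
Your core argument—compactness-contradiction reducing to \cite[Lemma 3.10]{Pan_escape}—is precisely the paper's strategy. But the paper's proof is substantially shorter than yours: it does \emph{not} rescale at all. It simply takes a contradicting sequence with $\delta_i\to 0$, passes $(Y_i,y_i,G_i,H_i)$ to an equivariant limit $(Y_\infty,y_\infty,G_\infty,H_\infty)$, observes that the limit orbit is $\mathbb{R}^k\times\{x_\infty\}$ because $\delta_i\to 0$, applies \cite[Lemma 3.10]{Pan_escape} to conclude $H_\infty$ acts by translations, and declares the result ``immediate.'' Your rescaling by $s_i=\max(a_i,\sqrt{\delta_i})$ is extra machinery the paper avoids; in the main case $\alpha=\lim a_i/s_i>0$ your argument and the paper's coincide after the rescale is undone.

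Where you part ways is in spotting the residual case $a_i=o(\sqrt{\delta_i})$ (equivalently $a_i\to 0$ in the unscaled setting). You are right that the paper's ``immediately implies'' step silently requires $d(h_\infty y_\infty,y_\infty)>0$: if $a_i\to 0$ then $h_\infty$ stabilizes $y_\infty$, the identity $d(h_\infty^2 y_\infty,y_\infty)=2\,d(h_\infty y_\infty,y_\infty)$ degenerates to $0=0$, and no contradiction surfaces. This is a legitimate observation about the lemma as stated (the paper only ever applies it with $d(hy,y)=1$, which sidesteps the issue).

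However, your proposed fix for the residual case does not work. If $a_i\ll s_i'$ and $s_i'^{-1}\delta_i\to 0$, then under the $s_i'^{-1}$-rescaling the orbit does become exactly Euclidean in the limit, but $d(h_i y_i,y_i)/s_i'=a_i/s_i'\to 0$, so $h_i$ converges to the identity on the orbit. Applying \cite[Lemma 3.10]{Pan_escape} at that scale again yields only $0=0$; $h_\infty'$ is \emph{not} ``a translation of magnitude $1$'' as you assert. There is no choice of intermediate scale that simultaneously makes $\delta_i$ negligible and keeps $a_i$ of order one—when $a_i=o(\sqrt{\delta_i})$, these two requirements are incompatible, which is exactly why the case is delicate. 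The natural blow-up to $a_i^{-1}$ (so $d(h_iy_i,y_i)\equiv 1$) destroys the Euclidean control on the orbit since $\delta_i/a_i$ may blow up, as you yourself observe. Closing this gap would require a genuinely new input about the local structure of the orbit near $y$, not another pass through \cite[Lemma 3.10]{Pan_escape} at a different scale.
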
	

\begin{proof}
	Suppose that $(Y_i,y_i,G_i)$ is a sequence of spaces such that for each $i$,\\
	(1) $d_{GH}((Y_i,y_i,G_i y_i),(\mathbb{R}^{k_i}\times X_i,(0,x_i),\mathbb{R}^{k_i}\times\{x_i\}))\le \delta_i\to 0$ for some length space $X_i$,\\
	(2) $G_i$ has a closed nilpotent subgroup $H_i$ of nilpotency length $\le n$ acting transitively on $G_i\cdot y_i$.\\
	Since $k_i\le n$, without lose of generality, we can assume that all $k_i$ are the same, denoted as $k$. Then passing to a subsequence, we obtain convergence
	$$(Y_i,y_i,G_i,H_i)\overset{GH}\longrightarrow (Y_\infty,y_\infty,G_\infty,H_\infty).$$
	It follows from (1) that $$(Y_\infty,y_\infty,G_\infty y_\infty)=(\mathbb{R}^k\times X_\infty,(0,\infty),\mathbb{R}^k\times\{x_\infty\}).$$
	Passing (2) to the limit, we see that $H_\infty$ is a closed nilpotent group acting transitively on $G_\infty \cdot y_\infty=\mathbb{R}^k\times\{x_\infty\}$. By \cite[Lemma 3.10]{Pan_escape}, $H_\infty$ acts as translations on $G_\infty\cdot y_\infty$. In particular, 
	$$d(h_\infty^2 y_\infty,y_\infty)=2\cdot d(h_\infty y_\infty,y_\infty)$$
	holds for all $h_\infty\in H_\infty$. This immediately implies the desired result.
\end{proof}

Theorem \ref{almost_eu_orbit} and Lemma \ref{almost_trans} restrict $\Gamma$-action on $\widetilde{M}$ at large scale if $E(M,x)\le\epsilon$.

\begin{lem}\label{almost_trans_large}
	Given $n$, there is $\epsilon(n)>0$ such that the following holds.
	
	Let $(M,p)$ be an open manifold of $\mathrm{Ric}\ge 0$ and $E(M,p)\le\epsilon(n)$. Let $N$ be a nilpotent subgroup of $\Gamma$ of finite index and nilpotency length $\le n$. Then there is $R>0$, depending on $M$, such that
	$$|\gamma^2|\ge 1.9\cdot |\gamma|$$
	for all $\gamma\in N$ with $|\gamma|\ge R$.
\end{lem}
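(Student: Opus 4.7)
The plan is to argue by contradiction and rescale by $|\gamma_i|$ so that Theorem \ref{almost_eu_orbit} and Lemma \ref{almost_trans} apply directly. First I fix $\epsilon(n)>0$ small enough that the function produced by Lemma \ref{almost_trans} (applied with $\delta=\Phi(\epsilon|n)$ coming from Theorem \ref{almost_eu_orbit}) satisfies $\Phi(\epsilon(n)|n)<0.1$. If the conclusion fails for this $\epsilon(n)$, there is a sequence $\gamma_i\in N$ with $r_i:=|\gamma_i|\to\infty$ and $|\gamma_i^2|<1.9\,r_i$. Passing to a subsequence I obtain the equivariant pointed Gromov-Hausdorff convergence
\[
(r_i^{-1}\widetilde{M},\tilde{p},\Gamma,N,\gamma_i)\overset{GH}\longrightarrow (Y,y,G,H,g),
\]
where $(Y,y,G)\in\Omega(\widetilde{M},\Gamma)$, $H\leq G$ is the closed limit of $N$, and $g\in H$ satisfies $d(gy,y)=1$ and $d(g^2 y,y)\le 1.9$.

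Next I would verify two structural properties of the limit subgroup $H$. First, $H$ has nilpotency length $\le n$: any iterated commutator of length $n+1$ in $H$ is a limit of such commutators in $N$, all of which are trivial. Second, $H$ acts transitively on $Gy$: since $[\Gamma:N]<\infty$, a choice of coset representatives $\beta_1,\dots,\beta_{k_0}$ makes $N\tilde{p}$ $C$-dense in $\Gamma\tilde{p}$ for $C=\max_j d(\beta_j\tilde{p},\tilde{p})$, and this density collapses after rescaling by $r_i^{-1}\to 0$, so $Hy=Gy$ in the limit.

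With these in hand, Theorem \ref{almost_eu_orbit} gives an integer $k$ and a length space $(X,x)$ with
\[
d_{GH}\bigl((Y,y,Gy),(\mathbb{R}^k\times X,(0,x),\mathbb{R}^k\times\{x\})\bigr)\le \Phi(\epsilon(n)|n).
\]
Lemma \ref{almost_trans}, applied to $(Y,y,G)$, its closed nilpotent subgroup $H$ of length $\leq n$, and the element $g$ with $d(gy,y)=1\le 10$, then yields
\[
d(g^2 y,y)\ge (2-\Phi(\epsilon(n)|n))\cdot d(gy,y)>1.9,
\]
contradicting $d(g^2 y,y)\le 1.9$.

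The main technical point is the second step, especially the transitivity $Hy=Gy$, which is where the finite-index assumption is genuinely used; preservation of nilpotency length under equivariant Gromov-Hausdorff limits is routine once the commutator identities in $N$ are passed to the limit. Everything else is a clean assembly of results already developed in the paper.
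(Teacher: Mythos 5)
Your proof is correct and follows essentially the same route as the paper's: argue by contradiction, rescale by $|\gamma_i|$, pass to an equivariant limit $(Y,y,G,H,g)$ with $d(gy,y)=1$ and $d(g^2y,y)\le 1.9$, then invoke Theorem \ref{almost_eu_orbit} and Lemma \ref{almost_trans} to force $d(g^2y,y)\ge (2-\Phi(\epsilon|n))\cdot d(gy,y)>1.9$. The one place where you diverge is the transitivity of $H$ on $Gy$. The paper notes that $H$ is a closed finite-index, hence open, subgroup of $G$, so each $H$-orbit is open and closed in $Gy$, and then cites Corollary \ref{cor_cnt_orbit} (connectedness of $Gy$, valid once $\epsilon$ is small) to conclude $Hy=Gy$. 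You instead argue directly: finite index gives a constant $C$ with $N\tilde p$ being $C$-dense in $\Gamma\tilde p$, the rescaling $r_i^{-1}\to 0$ collapses this to $0$-density in the limit, and since $Hy$ is closed (the isometry action on the proper limit space is proper) density forces $Hy=Gy$. Both are valid; your density-collapse argument is slightly more elementary and sidesteps Corollary \ref{cor_cnt_orbit} entirely, whereas the paper's topological argument is shorter given that the connectedness corollary was already established. Your explicit remark that the nilpotency length of $H$ is $\le n$ because iterated commutators pass to the limit fills in a step the paper states without justification.
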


\begin{proof}
    We argue by contradiction. Suppose that there is a sequence $\gamma_i\in N$ with $r_i=d(\gamma_i\tilde{p},\tilde{p})\to \infty$ and 
    $$d(\gamma_i^2\tilde{p},\tilde{p})< 1.9\cdot d(\gamma_i\tilde{p},\tilde{p}).$$
    We consider the convergence
    $$(r_i^{-1}\widetilde{M},\tilde{p},\Gamma,N,\gamma_i)\overset{GH}\longrightarrow(Y,y,G,H,h).$$
    $H$ is a closed nilpotent subgroup of $G$ with finite index and nilpotency length $\le n$. By Corollary \ref{cor_cnt_orbit}, the orbit $Gy$ is connected. Hence $H$ acts transitively on $Gy$. Then it follows from Theorem \ref{almost_eu_orbit} and Lemma \ref{almost_trans} that 
    $$d(h^2y,y)\ge (1-\Phi(\epsilon|n))\cdot d(hy,y).$$
    When $\epsilon$ is small that $\Phi(\epsilon|n)\le 0.01$, we see that
    $$d(\gamma_i^2\tilde{p},\tilde{p})\ge 1.95 \cdot d(\gamma_i\tilde{p},\tilde{p})$$
    for all $i$ large, which is a contradiction to our assumption.
\end{proof}

\begin{proof}[Proof of Theorem A]
	By \cite[Lemma 2.1]{Pan_escape}, $\Gamma$ is finitely generated. Then $\Gamma$ has a nilpotent subgroup of $\Gamma$ of finite index with nilpotency length at most $n$ \cite{Mil,Gro_poly,KW}. It follows from the same argument in \cite[Theorem A]{Pan_escape} (also see \cite[Section 4]{Pan_al_stable}) that the conclusion in Lemma \ref{almost_trans_large} implies that $N$ has an abelian subgroup of finite index.
\end{proof}

\end{document}